\newtheorem{thm}{Theorem}[section]
\newtheorem*{theorem*}{Theorem}
\newtheorem{lem}[thm]{Lemma}
\newtheorem{prop}[thm]{Proposition}
\theoremstyle{definition}
\newtheorem{rem}[thm]{Remark}
\numberwithin{equation}{section}
\theoremstyle{remark}
\begin{document}

\title{\textbf{The second moment of Dirichlet twists of a $\textrm{GL}_{4}$ automorphic $L$-function}}

\author{Keiju Sono}

\date{}
\allowdisplaybreaks

\maketitle 
\noindent
\begin{abstract}
In this paper, we give an asymptotic formula for the second moment of Dirichlet twists of an automorphic $L$-function $L(s, \pi)$ on the critical line averaged over characters and conductors, where  $\pi$ denotes an irreducible tempered cuspidal automorphic representation of $\mathrm{GL}_{4}(\mathbb{A}_{\mathbb{Q}})$ with unitary central character.  We give some hybrid bound for the error term with respect to the size of conductors of Dirichlet characters and that of the automorphic representation.

\footnote[0]{2010 {\it Mathematics Subject Classification}.  11M41 }
\footnote[0]{{\it Key Words and Phrases}. automorphic representations, automorphic $L$-functions, second moment}
\end{abstract}


\section{Introduction}
Estimation of the moments of the Riemann zeta-function or other $L$-functions has been regarded as a very important problem in analytic number theory. It is related to some fundamental problems in number theory, for example, estimation of the ranks of elliptic curves, zero density estimates for $L$-functions, the Lindel{\"o}f hypothesis and so on. For $k>0$, the $2k$ th moment of the Riemann zeta-function $\zeta (s)$ on the critical line $\Re (s)=1/2$ is defined by 
\[
I_{k}(T):=\int _{1}^{T}\left| \zeta (1/2+it ) \right|^{2k} dt
\]
for $T>1$. In 1918, Hardy and Littlewood \cite{HL} proved $I_{1}(T)\sim T\log T$, and in 1926 Ingham \cite{I} obtained $I_{2}(T)\sim (1/2\pi ^{2})T\log ^{4}T$. It is generally conjectured that $I_{k}(T)\sim C_{k}T\log ^{k^{2}}T$ for any $k>0$, where $C_{k}$ is some positive constant dependent only on $k$. The value of $C_{k}$ has been conjectured  by Keating and Snaith \cite{KS} using the random matrix theory. Though there are a number of works to compute $I_{k}(T)$, the main term has been obtained only in the cases $k=1,2$. It is widely believed that obtaining an asymptotic formula for $I_{k}(T)$ with $k>2$ is out of the reach of our current techniques. 

Also, there are a lot of studies on moments of other $L$-functions. The main problem is to compute the moments of a class of $L$-functions at the central point $s=1/2$ of their functional equations. For example, Paley \cite{P} obtained an asymptotic formula for the primitive Dirichlet $L$-functions 
\[
\sum _{\chi (\mathrm{mod}\; q)}^{\quad \quad *}\left| L\left(1/2, \chi \right) \right|^{2} \sim \frac{\varphi ^{*}(q) \varphi (q)}{q}\log q
\]
as $q\to \infty$ with $q \not\equiv 2 \; (\textrm{mod}\; 4)$, where $\varphi (q)$ and $\varphi ^{*}(q)$ denote the numbers of Dirichlet characters and primitive Dirichlet characters modulo $q$, respectively.  The asterisk in the summation means that the sum is over primitive characters. The asymptotic formula for the fourth moment of Dirichlet $L$-function was obtained by Heath-Brown \cite{HB} when $q$ does not have many prime factors,  and Soundararajan \cite{Sound} improved Heath-Brown's result. Young \cite{Y} obtained a power saving asymptotic formula for the fourth moment. Blomer, Fouvry, Kowalski, Michel and Mili\'cevi\'c \cite{BFKMM} studied the moment of the product of  Dirichlet twists of modular $L$-functions, and as a corollary they gave a significant improvement on the size of the error term of Young's formula.  Like the case of the Riemann zeta-function, the asymptotic formula of $2k$ th moment of Dirichlet $L$-functions at $s=1/2$ has not been obtained for any $k\neq 1,2$. 

However, by considering both average over conductors and integration on the critical line  in addition to the average over characters of the same moduli, one can obtain some asymptotic formulas for the moments of higher powers of Dirichlet $L$-functions. For example, Conrey, Iwaniec and Soundararajan \cite{CIS} obtained 
\begin{align*}
& \sum _{q\leq Q}\sum _{\chi (\mathrm{mod}\; q)}^{\quad \quad \flat}\int _{-\infty}^{\infty}\left| \Lambda (1/2+iy, \chi ) \right|^{6} dy \\
& \quad \sim 42 a_{3}\sum _{q\leq Q}\prod _{p|q}\frac{(1-1/p)^{5}}{1+4/p+1/p^{2}}\varphi ^{\flat}(q)\frac{(\log q)^{9}}{9!}\int _{-\infty}^{\infty}\left| \Gamma \left( \frac{1/2+iy}{2} \right) \right|^{6} dy
\end{align*}
unconditionally, where 
\[
a_{3}=\prod _{p}(1-1/p)^{4}(1+4/p+1/p^{2})
\]
and $\Lambda (s, \chi)$ is the completed $L$-function defined by 
\begin{equation}
\label{lambda}
\Lambda (1/2+s, \chi)=(q/\pi)^{s/2}\Gamma (1/4+s/2)L(1/2+s, \chi)
\end{equation}
satisfying the functional equation 
\begin{equation}
\label{fe}
\Lambda (1/2+s, \chi)=\varepsilon _{\chi}\Lambda (1/2-s, \overline{\chi}), \quad |\varepsilon _{\chi}|=1.
\end{equation}
The sum above is over even primitive characters and $\varphi ^{\flat}(q)$ denotes the number of even primitive characters modulo $q$. The leading coefficient $a_{3}$ above is found in the conjecture of Keating and Snaith \cite{KS} for the sixth moment of $\zeta (1/2+it)$, i.e., 
\[
\int _{1}^{T}\left| \zeta (1/2+it ) \right|^{6} dt \sim 42 a_{3}T\frac{(\log T)^{9}}{9!}, \quad T \to \infty .
\]
In the case of eighth moment, it is conjectured in \cite{CFKRS} that 
\[
\sum _{\chi (\mathrm{mod}\; q)}^{\quad \quad \flat}\left| L(1/2, \chi ) \right|^{8}\sim 24024 \varphi ^{\flat}(q)a_{4}\prod _{p|q}\frac{(1-1/p)^{7}}{1+9/p+9/p^{2}+1/p^{3}}\frac{(\log q)^{16}}{16!}
\]
as $q \to \infty$ with $q \not\equiv 2 \; (\textrm{mod}\; 4)$, where
\[
a_{4}=\prod _{p}(1-1/p)^{9}(1+9/p+9/p^{2}+1/p^{3}).
\]
Towards this conjecture, Chandee and Li \cite{CL} proved 
\begin{equation}
\label{CL}
\begin{aligned}
& \sum _{q\leq Q}\sum _{\chi (\mathrm{mod}\; q)}^{\quad \quad \flat}\int _{-\infty}^{\infty}\left| \Lambda (1/2+iy, \chi ) \right|^{8} dy \\
& \quad \sim 24024 a_{4}\sum _{q\leq Q}\prod _{p|q}\frac{(1-1/p)^{7}}{1+9/p+9/p^{2}+1/p^{3}}\varphi ^{\flat}(q)\frac{(\log q)^{16}}{16!}\int _{-\infty}^{\infty}\left| \Gamma \left( \frac{1/2+iy}{2} \right) \right|^{8} dy,
\end{aligned}
\end{equation}
under the assumption of the generalized Riemann hypothesis (GRH). They also studied the fourth moment of Dirichlet twists of a $\textrm{GL}_{2}$-automorphic $L$-function. For  a holomorphic modular form  $f$ of weight $k$ and full level, they obtained the asymptotic formula
\begin{equation}
\label{CLmodular}
\begin{aligned}
&\sum _{q}\Psi (q/Q) \sum _{\chi (\mathrm{mod}\; q)}^{\quad \quad *}\int _{-\infty}^{\infty}\left| \Lambda (1/2+iy, f \times \chi ) \right|^{4} dy \\
& \quad =\frac{1}{2\pi ^{2}}f_{2}\sum _{q}\Psi (q/Q)\varphi ^{*}(q)(2\log q)^{4}\prod _{p|q}\frac{1}{B_{p}(f, 1/2)}\int _{-\infty}^{\infty}\left| \Gamma \left( \frac{k/2+iy}{2} \right)\right|^{4} dy \\
& \quad \quad +O(Q^{2}(\log Q)^{3+\varepsilon})
\end{aligned}
\end{equation}
under the assumption of the GRH for each $L(s, f\times \chi)$, where $\Psi$ is a smooth function supported in $[1,2]$, $\Lambda (s, f\times \chi)$ is the completed $L$-function of $L(s, f\times \chi)$,  and $f_{2}$,  $B_{p}(1/2,f)$ are some constants dependent on $f$ given explicitly in \cite{CL}.  As researches in a similar direction, they also studied the sixth moment of automorphic $L$-functions \cite{CL2}, the second moment of $\mathrm{GL}(4) \times \mathrm{GL}(2)$ $L$-functions at special points \cite{CL3},  and the eighth moment of the family of automorphic $L$-functions of $\Gamma _{1}(q)$ \cite{CL4}.

The aim of this paper is to establish an asymptotic formula for the Dirichlet twists of a $\textrm{GL}_{4}$  automorphic $L$-function in a similar situation.  Let $\pi$ be an irreducible cuspidal automorphic representation of $\mathrm{GL}_{4}(\mathbb{A}_{\mathbb{Q}})$ with unitary central character. For $\Re (s)>1$,  let
\begin{equation}
\label{1}
L(s, \pi)=\sum _{n=1}^{\infty}\frac{a_{\pi}(n)}{n^{s}}=\prod _{p}\prod _{j=1}^{4}\left( 1-\frac{\alpha _{j}(p)}{p^{s}} \right)^{-1}
\end{equation}
be the  automorphic $L$-function associated to $\pi$, as defined by Godement and Jacquet in \cite{GJ}.  $L(s,\pi)$ is continued holomorphically to the whole complex plane, and satisfies the functional equation 
\begin{equation}
\label{2}
\Phi (s, \pi):=N_{\pi}^{s/2}\gamma (s,\pi)L(s,\pi)=\varepsilon_{\pi}\overline{\Phi}(1-s,\pi),
\end{equation}
where $|\varepsilon _{\pi}|=1$, $\overline{\Phi}(s,\pi):=\overline{\Phi (\overline{s}, \pi )}$ and $N_{\pi}$ is a positive integer called the conductor of $\pi$. The function $\gamma (s,\pi)$ is called the gamma factor,  given by
\[
\gamma (s,\pi)=\prod _{j=1}^{4}\Gamma _{\mathbb{R}}(s + \mu _{j})
\]
for some $\mu _{j}\in \mathbb{C}$, where $\Gamma _{\mathbb{R}}(s)=\pi ^{-s/2}\Gamma (s/2)$. The temperedness of the  automorphic representation $\pi$ means that the parameters of the Euler product and the functional equation satisfy the following conditions (see \cite{FPRS}): \\
1. {\bf Selberg bound}: $\Re (\mu _{j})\in \{0, 1\}$ for $j=1, \ldots ,4$. \\
2. {\bf Ramanujan bound}: If $p\nmid N_{\pi}$, then  $|\alpha _{j}(p)|=1$ for  $j=1, \ldots ,4$. If $p|N_{\pi}$, then  for each $j=1,\ldots ,4$ either  $\alpha _{j}(p)=0$ or $|\alpha _{j}(p)|=p^{-m_{j}/2}$  ($m_{j}\in \{0,1,2,\ldots \}$) holds.

Let $\chi$ be a primitive Dirichlet character modulo $q$ with $(q, N_{\pi})=1$. Then the twisted $L$-function $L(s, \pi \otimes \chi)$ is defined by
\begin{equation}
\label{3}
L(s, \pi \otimes \chi)=\sum _{n=1}^{\infty}\frac{a_{\pi}(n)\chi (n)}{n^{s}}=\prod _{p}\prod _{j=1}^{4}\left(1-\frac{\alpha _{j}(p)\chi (p)}{p^{s}} \right)^{-1}
\end{equation}
for $\Re (s)>1$. The twisted $L$-function $L(s,\pi \otimes \chi)$ is also continued holomorphically to the whole complex plane as an entire function of order $1$, and satisfies the functional equation 
\begin{equation}
\label{4}
\Phi (s, \pi \otimes \chi):=(q^{4}N_{\pi})^{s/2}\gamma _{\chi}(s, \pi)L(s, \pi \otimes \chi)=\varepsilon _{\pi ,\chi}\overline{\Phi}(1-s, \pi \otimes \chi ),
\end{equation}
where $|\varepsilon _{\pi ,\chi}|=1$ and the function $\gamma _{\chi}(s, \pi)$ is given by 
\[
\gamma _{\chi}(s, \pi)=\prod _{j=1}^{4}\Gamma _{\mathbb{R}}(s+\mu _{j,\chi})
\]
for some $\mu _{j,\chi}\in \mathbb{C}$. We put 
\begin{equation}
\label{5}
\Lambda (1/2+s, \pi \otimes \chi)=(q^{2}N_{\pi}^{1/2}/\pi ^{2})^{-1/2}\Phi (s+1/2, \pi \otimes \chi).
\end{equation}
Then the functional equation (\ref{4}) yields
\begin{equation}
\label{6}
\Lambda (1/2+s, \pi \otimes \chi)=\varepsilon_{\pi ,\chi}\Lambda (1/2-s, \overline{\pi \otimes \chi}),
\end{equation}
where $\overline{\pi \otimes \chi}:=\tilde{\pi}\otimes \overline{\chi}$. Here, $\tilde{\pi}$ denotes the contragredient representation of $\pi$. Let $\Psi$ be a smooth function compactly supported in $[1,2]$. Put $\mathfrak{m}:=\sum _{j=1}^{4}\Re (\mu _{j})$, 
\begin{equation}
\label{G}
G(s, t):=\prod _{j=1}^{4}\Gamma \left( \frac{s+it +\mu _{j}}{2} \right) \Gamma \left( \frac{s-it +\overline{\mu _{j}}}{2} \right).
\end{equation}
Then the main theorem of this paper is as follows. 
\begin{thm}
Let $\pi =\otimes _{v}\pi _{v}$ be an irreducible cuspidal tempered automorphic representation of $\mathrm{GL}_{4}(\mathbb{A}_{\mathbb{Q}})$ with unitary central character.  Suppose that the  $L$-function $L(s, \pi)$ is given by (\ref{1}) and satisfies the functional equation (\ref{2}) for some positive integer $N_{\pi}$. Furthermore, we assume that there exists an absolute constant $K>0$ for which 
\begin{equation}
\label{assumption}
\sum _{\chi (\mathrm{mod}\; q)}\int _{S\leq |s| \leq 2S}\left| L(1/2+s, \pi \otimes \chi ) \right|^{2} ds \ll \varphi (q)N_{\pi}^{\varepsilon}(S+1)\log ^{K}( q(S+1))
\end{equation}
holds for any $S>0$, where the integral above is over some line segment parallel to the imaginary axis in $0\leq \Re (s)\leq c/\log q$ for some positive constant $c$. Let $\Psi (x)$ be a smooth function compactly supported in $[1,2]$. Then we have
\begin{equation}
\label{maintheorem}
\begin{aligned}
& \underset{(q, N_{\pi})=1}{\sum _{q}}\Psi (q/Q)\sum _{\chi (\mathrm{mod}\; q)}^{\quad \quad \flat}\int _{-\infty}^{\infty}\left| \Lambda (1/2+iy, \pi \otimes \chi ) \right|^{2} dy \\
& \quad =2 \pi ^{-\mathfrak{m}}\int _{-\infty}^{\infty}G(1/2, t) dt \underset{(q, N_{\pi})=1}{\sum _{q}}\varphi ^{\flat}(q)\Psi (q/Q)\frac{A(1/2)}{B_{q}(1/2)}\log q  +O(Q^{2}N_{\pi}^{47/64+\varepsilon }(\log Q)^{\varepsilon})
\end{aligned}
\end{equation}
uniformly for $Q\geq 2$, $N_{\pi}\ll (\log Q)^{N}$ for any fixed $N$. Here, $G(s,t)$ is given by (\ref{G}), and $A(s)$, $B_{q}(s)$ are given by 
\[
A(s)=\prod _{p}(1-p^{-2s})B_{p}(s), \quad B_{q}(s)=\prod _{p|q}B_{p}(s)
\]
respectively, where 
\[
B_{p}(s)=\int _{0}^{1}\prod _{j=1}^{4}(1-\alpha _{j}(p)e(\theta)p^{-s})^{-1}(1-\overline{\alpha _{j}(p)}e(-\theta)p^{-s})^{-1} d\theta 
\]
with $e(\theta ):=\exp (2\pi i \theta )$.  The implied constant in (\ref{maintheorem}) is independent of both $Q$ and $N_{\pi}$. 
\end{thm}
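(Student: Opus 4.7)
The plan is to follow the standard Conrey--Iwaniec--Soundararajan / Chandee--Li strategy used to derive (\ref{CLmodular}): open up the squared completed $L$-function as a Dirichlet bilinear form with a smooth length-$q^{2}N_{\pi}^{1/2}$ cutoff, apply character orthogonality to impose a congruence $m\equiv\pm n\pmod q$, extract the main term from the diagonal $m=n$ via the Rankin--Selberg factorization, and bound the remaining off-diagonal with the help of the assumed mean-square estimate (\ref{assumption}).

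The first step is to derive an identity of the shape
\[
\int_{-\infty}^{\infty}|\Lambda(1/2+iy,\pi\otimes\chi)|^{2}\,dy
= 2\pi^{-\mathfrak{m}}\sum_{m,n\ge 1}\frac{a_{\pi}(m)\overline{a_{\pi}(n)}\chi(m)\overline{\chi(n)}}{\sqrt{mn}}\,W\!\left(\frac{mn}{q^{4}N_{\pi}}\right),
\]
where $W$ is a rapidly decreasing smooth weight produced as an inverse Mellin transform involving $G(1/2+s,t)$, so in particular $W(0)=\int_{-\infty}^{\infty}G(1/2,t)\,dt$. I would obtain this by writing $|\Lambda|^{2}=\Lambda\cdot\overline{\Lambda}$, expanding each factor through an inverse Mellin transform against $G(s,t)$, integrating in $y$, and folding one of the two resulting contours back via the functional equation (\ref{6}). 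Averaging the resulting expression over primitive even characters modulo $q$ gives, by orthogonality, a weighted count of pairs $(m,n)$ with $(mn,q)=1$ and $m\equiv\pm n\pmod q$. The diagonal $m=n$ yields
\[
\varphi^{\flat}(q)\sum_{(n,q)=1}\frac{|a_{\pi}(n)|^{2}}{n}\,W\!\left(\frac{n^{2}}{q^{4}N_{\pi}}\right),
\]
which I would evaluate by Mellin inversion: using the factorization $\sum_{n}|a_{\pi}(n)|^{2}n^{-s}=L(s,\pi\times\tilde\pi)\cdot(\text{entire Euler correction})$, the simple pole of the Rankin--Selberg $L$-function at $s=1$ produces, after shifting contours past it, the $\log q$ factor with leading constant $A(1/2)$, while the coprimality condition strips the local factors at $p\mid q$, contributing $1/B_{q}(1/2)$. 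Summing against $\Psi(q/Q)$ yields the stated main term in (\ref{maintheorem}).

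The main obstacle is the off-diagonal $m\neq n$ with $m\equiv\pm n\pmod q$. Writing $m-n=qh$ (resp.\ $m+n=qh$) with $h\neq 0$ and reinterpreting the resulting sum as a contour integral of $L(1/2+s,\pi\otimes\chi)\,L(1/2+w,\overline{\pi\otimes\chi})$ against the Mellin transform of $W$, I would shift the contours to $\Re s,\Re w=c/\log q$ and apply Cauchy--Schwarz in $\chi$ together with hypothesis (\ref{assumption}) on each factor; summing over $q\asymp Q$ should then give the error $\ll Q^{2}(\log Q)^{\varepsilon}$ in the $Q$-aspect. The hybrid exponent $N_{\pi}^{47/64+\varepsilon}$ must be tracked separately, since the analytic conductor of $\pi\otimes\chi$ is $q^{4}N_{\pi}$ and the gamma factors therefore contribute polynomial growth in $N_{\pi}$; at the ramified primes $p\mid N_{\pi}$ the Ramanujan/Selberg hypotheses allow only $|\alpha_{j}(p)|\le p^{-m_{j}/2}$ with $m_{j}\ge 0$, and balancing the resulting bad-prime Euler factors against the $\pi$-aspect subconvexity input supplied by (\ref{assumption}) is what is expected to produce the exponent $47/64$. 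The most delicate regime is that of short shifts $|h|\asymp 1$, where the congruence is very restrictive but the Dirichlet sum is still long and crude contour estimates would lose a full factor of $Q$: here it is crucial that (\ref{assumption}) is an averaged second moment over $\chi$ rather than a pointwise subconvexity bound, and that one exploits this averaging after Cauchy--Schwarz rather than estimating each $L$-factor individually.
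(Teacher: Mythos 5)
Your overall scaffold (Mellin-expand $|\Lambda|^2$ into a weighted Dirichlet bilinear form, apply orthogonality over even primitive characters, extract the main term from the diagonal via the Rankin--Selberg factorization, bound the off-diagonal using the assumed mean-square estimate (\ref{assumption})) is indeed the one the paper uses, and your diagonal computation is on track. However, there are two substantive gaps.

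First, and most importantly, you misidentify the mechanism behind the hybrid exponent $N_{\pi}^{47/64+\varepsilon}$. You suggest it arises from ``balancing bad-prime Euler factors against the $\pi$-aspect subconvexity input supplied by (\ref{assumption})'', but the assumption (\ref{assumption}) is only used in the off-diagonal and delivers a saving in $\log Q$, not the $N_{\pi}$-power. In the paper, the exponent $47/64$ comes from a step that is entirely missing from your plan: one first \emph{approximates} the natural smoothing $V(m,n;qN_{\pi}^{1/4})$ by a shortened version $V(m,n;qN_{\pi}^{1/4}/(\log Q)^{\alpha})$, so that the effective length of the bilinear form drops to $mn\ll (qN_{\pi}^{1/4})^{4}/(\log Q)^{4\alpha}$. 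The cost of this replacement, $\Delta_{\pi}-\tilde{\Delta}_{\pi}$, is controlled by the large sieve inequality (Lemma 2.3) combined with the Brumley--Thorner--Zaman second-moment bound $\sum_{x\le n<2x}|a_{\pi}(n)|^{2}\ll N_{\pi}^{\varepsilon}x + N_{\pi}^{3/8+\varepsilon}x^{15/16}$ (Lemma 2.4), and it is the contour-shift exponent $\Re(z)=15/32+\varepsilon$ acting on $(QN_{\pi}^{1/4})^{2(v+z)}$ in the dyadic ranges $M,N>Q^{2}$ that produces $N_{\pi}^{(2\cdot 15/32+2)/4}=N_{\pi}^{47/64}$. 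Without this truncation step your off-diagonal would be a length-$Q^{4}$ bilinear form with no $\log$-saving to work with, and the stated error term would not be achievable.

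Second, your off-diagonal treatment is too coarse. You propose to parametrize by $m\pm n = qh$ and shift contours, but the orthogonality relation for primitive even characters is not a clean congruence $m\equiv \mp n\pmod q$; it is the M\"obius-corrected identity $\sum_{\chi}^{\flat}\chi(m)\overline{\chi}(n)=\tfrac12\sum_{q=dr,\,r\mid(m\pm n)}\mu(d)\varphi(r)$, so what appears is a sum over factorizations $q=dr$. The paper then splits the off-diagonal into two very different pieces according to whether $d>D$ or $d\le D$ with $D=(\log Q)^{\delta}$. The $d>D$ part is small by positivity of $\mu(d)$-free tails and (\ref{assumption}). The $d\le D$ part requires a full GCD decomposition $m=gM$, $n=gN$ with $(M,N)=1$, a further M\"obius opening of $\varphi(r)=\sum_{al=r}\mu(a)l$, a restriction to $a\le 2Q$ forced by the support of $\Psi$, the Chandee--Li Mellin-transform machinery for the weight $\mathcal{W}^{\pm}$, and a re-introduction of Dirichlet characters modulo $abh$ so that (\ref{assumption}) can be applied at each modulus; none of this is foreseen in your plan. ``Exploit averaging after Cauchy--Schwarz'' is not wrong in spirit, but without the above combinatorial setup one cannot actually bring a modulus-by-modulus mean-square hypothesis to bear on the divisor-structured off-diagonal.
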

\begin{rem}
1. Since the main term of (\ref{maintheorem}) is of order $Q^{2}\log Q$, the main term dominates the error term if $N_{\pi}\ll (\log Q)^{64/47-\delta}$ for any $\delta >0$. \\
2. If $N_{\pi}=1$, then $B_{p}(1/2)$  is expressed by the Dirichlet coefficients of $L(s, \pi)$ by the following formula.  Define the polynomials $f_{i}=f_{i}(s_{1},s_{2},s_{3},s_{4})$ $(i=0, \ldots ,9)$ and $g_{i}=g_{i}(s_{1},s_{2},s_{3},s_{4})$ $(i=0, \ldots ,12)$ by 
\[
f_{0}=s_{4}^{2}, \quad f_{1}=3s_{4}^{2}, \quad f_{2}=-s_{2}^{2}s_{4}+6s_{4}^{2}, \quad
f_{3}=s_{2}s_{3}^{2}+s_{1}^{2}s_{2}s_{4}-3s_{2}^{2}s_{4}-2s_{1}s_{3}s_{4}+10s_{4}^{2},
\]
\[
f_{4}=-s_{1}^{2}s_{3}^{2}+2s_{2}s_{3}^{2}+2s_{1}^{2}s_{2}s_{4}-4s_{2}^{2}s_{4}-2s_{1}s_{3}s_{4}+12s_{4}^{2},
\]
\[
f_{i}=f_{9-i} \quad (i=5, \ldots ,9),
\]
\[g_{0}=s_{4}^{3}, \quad g_{1}=-s_{1}s_{3}s_{4}^{2}+4s_{4}^{3}, \quad 
g_{2}=s_{2}s_{3}^{2}s_{4}+s_{1}^{2}s_{2}s_{4}^{2}-2s_{2}^{2}s_{4}^{2}-4s_{1}s_{3}s_{4}^{2}+10s_{4}^{3},
\]
\begin{align*}
g_{3}=&-s_{3}^{4}-s_{1}s_{2}^{2}s_{3}s_{4}+7s_{2}s_{3}^{2}s_{4}-s_{1}^{4}s_{4}^{2}+7s_{1}^{2}s_{2}s_{4}^{2} -8s_{2}^{2}s_{4}^{2}-13s_{1}s_{3}s_{4}^{2}+20s_{4}^{3}, 
\end{align*}
\begin{align*}
g_{4}=&s_{1}s_{2}s_{3}^{3}-3s_{3}^{4}+s_{2}^{2}s_{4}+s_{1}^{3}s_{2}s_{3}s_{4}-8s_{1}s_{2}^{2}s_{3}s_{4}-s_{1}^{2}s_{3}^{2}s_{4}+18s_{2}s_{3}^{2}s_{4} \\
&-3s_{1}^{4}s_{4}^{2}+18s_{1}^{2}s_{2}s_{4}^{2}-16s_{2}^{2}s_{4}^{2}-24s_{1}s_{3}s_{4}^{2}+31s_{4}^{3}, 
\end{align*}
\begin{align*}
g_{5}=&-s_{2}^{3}s_{3}^{2}-s_{1}^{3}s_{3}^{3}+5s_{1}s_{2}s_{3}^{3}-6s_{3}^{4}-s_{1}^{2}s_{2}^{3}s_{4}+4s_{2}^{4}s_{4}+5s_{1}^{3}s_{2}s_{3}s_{4}-19s_{1}s_{2}^{2}s_{3}s_{4} \\
&-2s_{1}^{2}s_{3}^{2}s_{4}+29s_{2}s_{3}^{2}s_{4}-6s_{1}^{4}s_{4}^{2}+29s_{1}^{2}s_{2}s_{4}^{2}-24s_{2}^{2}s_{4}^{2}-34s_{1}s_{3}s_{4}^{2}+40s_{4}^{3}, 
\end{align*}
\begin{align*}
g_{6}=&s_{1}^{2}s_{2}^{2}s_{3}^{2}-2s_{2}^{3}s_{3}^{2}-21s_{1}^{3}s_{3}^{3}+6s_{1}s_{2}s_{3}^{3}-7s_{3}^{4}-2s_{1}^{2}s_{2}^{3}s_{4}+6s_{2}^{4}s_{4}+6s_{1}^{3}s_{2}s_{3}s_{4} \\
&-24s_{1}s_{2}^{2}s_{3}s_{4}+34s_{2}s_{3}^{2}s_{4}-7s_{1}^{4}s_{4}^{2}+34s_{1}^{2}s_{2}s_{4}^{2}-28s_{2}^{2}s_{4}^{2}-40s_{1}s_{3}s_{4}^{2}+44s_{4}^{3},
\end{align*}
\[
g_{i}=g_{12-i} \quad (i=7, \ldots ,12)
\]
and put 
\begin{equation}
\label{N}
\begin{aligned}
N_{\pi}(p)=&\sum _{i=0}^{9}f_{i}(a_{\pi}(p), a_{\pi}(p)^{2}-a_{\pi}(p^{2}),a_{\pi}(p)^{3}-2a_{\pi}(p)a_{\pi}(p^{2})+a_{\pi}(p^{3}),  \\
& \quad \quad \quad \quad a_{\pi}(p)^{4}-3a_{\pi}(p)^{2}a_{\pi}(p^{2})+a_{\pi}(p^{2})^{2}+2a_{\pi}(p)a_{\pi}(p^{3})-a_{\pi}(p^{4}))p^{i},
\end{aligned}
\end{equation}
\begin{equation}
\label{D}
\begin{aligned}
D_{\pi}(p)=&\sum _{i=0}^{12}g_{i}(a_{\pi}(p), a_{\pi}(p)^{2}-a_{\pi}(p^{2}),a_{\pi}(p)^{3}-2a_{\pi}(p)a_{\pi}(p^{2})+a_{\pi}(p^{3}),  \\
& \quad \quad \quad \quad a_{\pi}(p)^{4}-3a_{\pi}(p)^{2}a_{\pi}(p^{2})+a_{\pi}(p^{2})^{2}+2a_{\pi}(p)a_{\pi}(p^{3})-a_{\pi}(p^{4}))p^{i}.
\end{aligned}
\end{equation}
Then 
\begin{equation}
\label{Bp1}
B_{p}(1/2)=\frac{p^{4}(a_{\pi}(p)^{4}-3a_{\pi}(p)^{2}a_{\pi}(p^{2})+a_{\pi}(p^{2})^{2}+2a_{\pi}(p)a_{\pi}(p^{3})-a_{\pi}(p^{4}))}{p-1}\frac{N_{\pi}(p)}{D_{\pi}(p)}.
\end{equation}
In addition, $B_{p}(1/2)$ can be expressed by the Dirichlet coefficients of the logarithmic derivative of $L(s, \pi)$. Write 
\[
\frac{L^{'}(s, \pi)}{L(s, \pi)}=-\sum _{n=1}^{\infty}\frac{\tilde{a}_{\pi}(n)\Lambda (n)}{n^{s}},
\]
where $\Lambda$ is the von Mangoldt function. Then $N_{\pi}(p)$ and $D_{\pi}(p)$ above are expressed by 
\begin{equation}
\label{N2}
\begin{aligned}
N_{\pi}(p)=&\sum _{i=0}^{9}f_{i}\left( \tilde{a}_{\pi} (p), \frac{1}{2}(\tilde{a}_{\pi}(p)^{2}-\tilde{a}_{\pi}(p^{2})), \frac{1}{3}\tilde{a}_{\pi}(p^{3})+\frac{1}{6}\tilde{a}_{\pi}(p)^{3}-\frac{1}{2}\tilde{a}_{\pi}(p)\tilde{a}_{\pi}(p^{2}), \right.  \\
& \quad \quad \quad \quad \left. \frac{1}{24}\tilde{a}_{\pi}(p)^{4}-\frac{1}{4}\tilde{a}_{\pi}(p)^{2}\tilde{a}_{\pi}(p^{2})+\frac{1}{8}\tilde{a}_{\pi}(p^{2})^{2}+\frac{1}{3}\tilde{a}_{\pi}(p)\tilde{a}_{\pi}(p^{3})-\frac{1}{4}\tilde{a}_{\pi}(p^{4}) \right)p^{i},
\end{aligned}
\end{equation}
\begin{equation}
\label{D2}
\begin{aligned}
D_{\pi}(p)=&\sum _{i=0}^{12}g_{i}\left( \tilde{a}_{\pi} (p), \frac{1}{2}(\tilde{a}_{\pi}(p)^{2}-\tilde{a}_{\pi}(p^{2})), \frac{1}{3}\tilde{a}_{\pi}(p^{3})+\frac{1}{6}\tilde{a}_{\pi}(p)^{3}-\frac{1}{2}\tilde{a}_{\pi}(p)\tilde{a}_{\pi}(p^{2}), \right.  \\
& \quad \quad \quad \quad \left. \frac{1}{24}\tilde{a}_{\pi}(p)^{4}-\frac{1}{4}\tilde{a}_{\pi}(p)^{2}\tilde{a}_{\pi}(p^{2})+\frac{1}{8}\tilde{a}_{\pi}(p^{2})^{2}+\frac{1}{3}\tilde{a}_{\pi}(p)\tilde{a}_{\pi}(p^{3})-\frac{1}{4}\tilde{a}_{\pi}(p^{4}) \right)p^{i}
\end{aligned}
\end{equation}
and we have 
\begin{equation}
\label{Bp2}
B_{p}(1/2)=\frac{p^{4} (\frac{1}{24}\tilde{a}_{\pi}(p)^{4}-\frac{1}{4}\tilde{a}_{\pi}(p)^{2}\tilde{a}_{\pi}(p^{2})+\frac{1}{8}\tilde{a}_{\pi}(p^{2})^{2}+\frac{1}{3}\tilde{a}_{\pi}(p)\tilde{a}_{\pi}(p^{3})-\frac{1}{4}\tilde{a}_{\pi}(p^{4}) )  }{p-1}\frac{N_{\pi}(p)}{D_{\pi}(p)}.
\end{equation}
\end{rem}

\section{The approximation and decomposition of the moment}
Let   $\pi$ be an irreducible tempered cuspidal automorphic representation of $\mathrm{GL}_{4}(\mathbb{A}_{\mathbb{Q}})$ with unitary central character and conductor $N_{\pi}$,  and $\chi$ be an even primitive Dirichlet character modulo $q$, where $(q, N_{\pi})=1$.   Put
\begin{equation}
\label{8}
\Lambda (s, \pi \otimes \chi ;t):=\Lambda (s+it, \pi \otimes \chi )\Lambda (s-it, \overline{\pi \otimes \chi}),
\end{equation}
\begin{equation}
\label{9}
G_{\chi}(s,t):=\prod _{j=1}^{4}\Gamma \left( \frac{s+it+\mu _{j,\chi}}{2} \right) \Gamma \left( \frac{s-it+\overline{\mu _{j,\chi}}}{2} \right).
\end{equation}
Then we have the following formulas.
\begin{lem}
We have 
\begin{equation}
\label{13}
\Lambda (1/2, \pi \otimes \chi ;t)=2\pi ^{-\mathfrak{m}_{\chi}}\sum _{m,n=1}^{\infty}\frac{a_{\pi}(m)\overline{a_{\pi}(n)}\chi (m)\overline{\chi }(n)}{\sqrt{mn}}\left( \frac{n}{m} \right)^{it}W_{\chi}\left( \frac{\pi ^{4}mn}{q^{4}N_{\pi}},t \right)
\end{equation}
and
\begin{equation}
\label{14}
\int _{-\infty}^{\infty} \Lambda (1/2, \pi \otimes \chi ;t)dt =2\pi ^{-\mathfrak{m}_{\chi}}\sum _{m,n=1}^{\infty}\frac{a_{\pi}(m)\overline{a_{\pi}(n)}\chi (m)\overline{\chi }(n)}{\sqrt{mn}}V_{\chi}(m,n;qN_{\pi}^{1/4}),
\end{equation}
where $\mathfrak{m}_{\chi}:=\sum _{j=1}^{4}\Re (\mu _{j, \chi})$, 
\begin{equation}
\label{15}
W_{\chi}(x,t):=\frac{1}{2\pi i}\int _{(1)}G_{\chi}(1/2+s, t)x^{-s} \frac{ds}{s},
\end{equation}
\begin{equation}
\label{16}
\begin{aligned}
V_{\chi}(\xi ,\eta ;\mu )&:=\int _{-\infty}^{\infty}\left( \frac{\eta}{\xi} \right)^{it} W_{\chi}\left( \frac{\pi ^{4}\xi \eta}{\mu ^{4}}, t \right) dt \\
&=\frac{1}{2\pi i}\int _{-\infty}^{\infty}\int _{(1)}\left( \frac{\eta}{\xi} \right)^{it}G_{\chi}(1/2+s,t)\left( \frac{\pi ^{4}\xi \eta}{\mu ^{4}} \right)^{-s}\frac{ds}{s}dt.
\end{aligned}
\end{equation}
\end{lem}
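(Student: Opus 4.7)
The plan is to treat this as a symmetric approximate functional equation for
$$I(s) := \Lambda(1/2+s+it,\pi\otimes\chi)\,\Lambda(1/2+s-it,\overline{\pi\otimes\chi})$$
and recover $\Lambda(1/2,\pi\otimes\chi;t)=I(0)$ as the residue at $s=0$ of the Mellin--Barnes integral $J := \frac{1}{2\pi i}\int_{(2)} I(s)\,ds/s$. Since $\Lambda(s,\pi\otimes\chi)$ is entire of order $1$, so is $I(s)$, and convexity bounds for the $L$-factors together with Stirling decay of the gamma factors justify all contour manipulations.

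First I verify the symmetry $I(-s)=I(s)$. Two applications of (\ref{6}) give $\Lambda(1/2-s+it,\pi\otimes\chi) = \varepsilon_{\pi,\chi}\Lambda(1/2+s-it,\overline{\pi\otimes\chi})$ and $\Lambda(1/2-s-it,\overline{\pi\otimes\chi}) = \overline{\varepsilon_{\pi,\chi}}\Lambda(1/2+s+it,\pi\otimes\chi)$; multiplying these and using $|\varepsilon_{\pi,\chi}|=1$ yields $I(-s)=I(s)$. Shifting the contour of $J$ from $\Re(s)=2$ to $\Re(s)=-2$ crosses only the simple pole of $1/s$ at $s=0$ with residue $I(0)$, and substituting $s\mapsto -s$ on the shifted line together with the symmetry turns the shifted integral into $-J$. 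Hence $J=I(0)-J$, giving $I(0)=2J$.

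Next I expand $I(s)$ on $\Re(s)=2$, where both Dirichlet series from (\ref{3}) converge absolutely. Combining (\ref{5}) with (\ref{4}), and using the elementary identity
$$\gamma_{\chi}(1/2+s+it,\pi)\,\gamma_{\overline{\chi}}(1/2+s-it,\pi) = \pi^{-2-4s-\mathfrak{m}_{\chi}}\,G_{\chi}(1/2+s,t),$$
which follows from $\Gamma_{\mathbb{R}}(w)=\pi^{-w/2}\Gamma(w/2)$, the definition (\ref{9}), and $\mu_{j,\overline{\chi}}=\overline{\mu_{j,\chi}}$ (since $\overline{\pi\otimes\chi}$ is the contragredient), the archimedean and conductor factors in $I(s)$ collapse into $\pi^{-\mathfrak{m}_{\chi}}G_{\chi}(1/2+s,t)(q^{4}N_{\pi}/\pi^{4})^{s}$. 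Substituting the product of Dirichlet series
$$L(1/2+s+it,\pi\otimes\chi)L(1/2+s-it,\overline{\pi\otimes\chi}) = \sum_{m,n\geq 1}\frac{a_{\pi}(m)\overline{a_{\pi}(n)}\chi(m)\overline{\chi}(n)}{(mn)^{1/2+s}}(n/m)^{it}$$
into $I(0)=2J$, interchanging sum and integral (justified by absolute convergence together with Stirling decay of $G_{\chi}$), and recognizing the inner integral as $W_{\chi}(\pi^{4}mn/(q^{4}N_{\pi}),t)$ via (\ref{15}) — the Selberg bound keeps the poles of $G_{\chi}(1/2+s,t)$ in $\Re(s)\leq -1/2$, so the contour can be shifted freely from $(2)$ to $(1)$ — proves (\ref{13}). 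Formula (\ref{14}) then follows by integrating (\ref{13}) over $t\in\mathbb{R}$ and swapping the $t$-integral with the sum over $m,n$; the resulting $t$-integral is exactly $V_{\chi}(m,n;qN_{\pi}^{1/4})$ by (\ref{16}), and the interchange is again justified by superpolynomial decay of $G_{\chi}(3/2+i\tau,t)$ in $t$ uniformly for bounded $\tau$.

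The one place where care is needed is the bookkeeping in step three: one must match $(q^{2}N_{\pi}^{1/2}/\pi^{2})^{s}$ coming from (\ref{5}) with the $\pi^{-2-4s}$ extracted from the four pairs of $\Gamma_{\mathbb{R}}$ factors, so that the kernel inside $J$ is exactly $W_{\chi}$ of (\ref{15}) rather than a variant. Beyond this, the argument is a textbook derivation of a symmetric approximate functional equation via contour shifting and absolutely convergent Dirichlet series.
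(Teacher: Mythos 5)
Your proof is correct and follows essentially the same contour-shift argument as the paper: both define $P$ (your $J$) as a Mellin--Barnes integral of $\Lambda(1/2+s,\pi\otimes\chi;t)/s$, shift the contour past the simple pole at $s=0$, use the functional equation (\ref{6}) (equivalently your symmetry $I(-s)=I(s)$) to recognize the shifted integral as $-J$, conclude $\Lambda(1/2,\pi\otimes\chi;t)=2J$, and then expand the Dirichlet series to produce the kernel $W_\chi$. Your more explicit bookkeeping of the archimedean and conductor factors, showing they collapse to $\pi^{-\mathfrak{m}_\chi}G_\chi(1/2+s,t)(q^4N_\pi/\pi^4)^s$, is a nice supplement to the paper's terser "Expanding $L$-functions as Dirichlet series" step, but the underlying computation is the same.
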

\begin{proof}
Put 
\begin{align*}
P(\pi \otimes \chi ,t)&:=\frac{1}{2\pi i}\int _{(1)}\Lambda (1/2+s, \pi \otimes \chi ;t) \frac{ds}{s} \\
&=\frac{1}{2\pi i}\int _{(1)}\Lambda (1/2+s+it, \pi \otimes \chi)\Lambda (1/2+s-it, \overline{\pi \otimes \chi}) \frac{ds}{s}.
\end{align*}
We move the line of integration to $\Re (s)=-1$. Then we cross a single pole at $s=0$ of order $1$ and the residue is $\Lambda (1/2, \pi \otimes \chi ;t)$. By the functional equation (\ref{6}), we see that the new integral equals $-P(\pi \otimes \chi ,t)$. Hence we have
\[
\Lambda (1/2, \pi \otimes \chi ;t)=2P(\pi \otimes \chi ,t).
\]
Expanding $L$-functions as Dirichlet  series, we have 
\[
P(\pi \otimes \chi ,t)=\pi ^{-\mathfrak{m}_{\chi}}\sum _{m,n=1}^{\infty}\frac{a_{\pi}(m)\overline{a_{\pi}(n)}\chi (m)\overline{\chi}(n)}{\sqrt{mn}}\left( \frac{n}{m} \right)^{it}\frac{1}{2\pi i}\int _{(1)}G_{\chi}(1/2+s, t)\left( \frac{\pi ^{4}mn}{q^{4}N_{\pi}} \right)^{-s} \frac{ds}{s}.
\]
Thus we get (\ref{13}). The identity (\ref{14}) is obtained by integrating both sides of (\ref{13}) by $t$. 
\end{proof}

It is known that if $(q, N_{\pi})=1$, even primitive Dirichlet characters $\chi$ modulo $q$ satisfy $\{\mu _{j, \chi}\}_{j=1}^{4}= \{\mu _{j}\}_{j=1}^{4}$ (see \cite{RS}, for example), so the  parameter $\mathfrak{m}_{\chi}$ and the function $G_{\chi}$ (hence $W_{\chi}$, $V_{\chi}$) are all independent of $\chi$. Hence from here we denote these parameter and functions by $\mathfrak{m}$, $G$, $W$ and $V$ respectively. Then it follows from the above lemma that
\begin{equation}
\label{18}
\begin{aligned}
& \underset{(q, N_{\pi})=1}{\sum _{q}}\Psi (q/Q)\sum _{\chi (\mathrm{mod}\; q)}^{\quad \quad \flat}\int  _{-\infty}^{\infty}\left| \Lambda (1/2+iy, \pi \otimes \chi ) \right|^{2} dy \\
& \quad =2\pi ^{-\mathfrak{m}}\sum _{m,n=1}^{\infty}\frac{a_{\pi}(m)\overline{a_{\pi}(n)}}{\sqrt{mn}}\underset{(q, N_{\pi})=1}{\sum _{q}}\Psi (q/Q)V(m, n; qN_{\pi}^{1/4})\sum _{\chi (\mathrm{mod}\; q)}^{\quad \quad \flat}\chi(m)\overline{\chi}(n).
\end{aligned}
\end{equation}
To handle the sum over even primitive characters, we use the following lemma.
\begin{lem}[\cite{Sound}, (2.1)]
If $(mn,q)=1$, then
\[
\sum _{\chi (\mathrm{mod}\; q)}^{\quad \quad \flat}\chi (m)\overline{\chi}(n)=\frac{1}{2}\underset{r|(m\pm n)}{\sum _{q=dr}}\mu (d)\varphi (r),
\]
where we sum over both choices of sign.
\end{lem}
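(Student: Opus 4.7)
The plan is to attack this in three steps: pass from even primitive characters to all even characters via Möbius inversion on the conductor, evaluate the resulting sum by standard orthogonality, and then rewrite the answer in the stated form.

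First, every Dirichlet character $\chi$ modulo $q$ is induced from a unique primitive character $\chi^{\star}$ modulo some divisor $r$ of $q$, and $\chi(-1)=\chi^{\star}(-1)$ since $-1$ is coprime to every modulus. Thus induction preserves parity. Writing
\[
F(q) := \sum_{\substack{\chi \pmod q \\ \chi(-1)=1}} \chi(m)\overline{\chi}(n), \qquad F^{\flat}(q) := \sum_{\chi \pmod q}^{\flat} \chi(m)\overline{\chi}(n),
\]
we have $F(q)=\sum_{r\mid q}F^{\flat}(r)$, so Möbius inversion gives
\[
F^{\flat}(q) = \sum_{d\mid q}\mu(d)\,F(q/d).
\]

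Next, I would apply the parity projector $\mathbf{1}_{\chi(-1)=1}=\tfrac{1}{2}(1+\chi(-1))$ to rewrite
\[
F(q) = \tfrac{1}{2}\sum_{\chi \pmod q}\bigl(\chi(m)+\chi(-m)\bigr)\overline{\chi}(n).
\]
Under the hypothesis $(mn,q)=1$, standard orthogonality gives $\sum_{\chi \pmod q}\chi(a)\overline{\chi}(n)=\varphi(q)$ when $a\equiv n \pmod q$ and zero otherwise, so
\[
F(q) = \tfrac{\varphi(q)}{2}\bigl(\mathbf{1}_{q\mid (m-n)} + \mathbf{1}_{q\mid (m+n)}\bigr).
\]

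Finally, substituting back, setting $q=dr$, and combining the two indicator functions into the single condition $r\mid(m\pm n)$ summed over both choices of sign yields
\[
F^{\flat}(q) = \tfrac{1}{2}\sum_{\substack{q=dr \\ r\mid(m\pm n)}}\mu(d)\,\varphi(r),
\]
as claimed. There is no deep obstacle here: the proof is essentially Möbius inversion plus orthogonality. The one subtlety worth emphasizing is the parity-preservation step under induction, which is what lets Möbius inversion couple $F^{\flat}$ and $F$ in the clean form above; a minor compatibility check that $(mn,q/d)=1$ at every intermediate level is automatic from $(mn,q)=1$.
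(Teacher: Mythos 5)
Your proof is correct and is the standard argument: Möbius inversion on the conductor to pass between primitive even characters and all even characters, the parity projector $\tfrac12(1+\chi(-1))$, and orthogonality. The paper itself does not prove this lemma but simply cites Soundararajan \cite{Sound}, equation (2.1), whose proof proceeds exactly as you describe, so there is no divergence to compare.
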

By this lemma, the right hand side of (\ref{18}) becomes 
\begin{equation}
\label{19}
\pi ^{-\mathfrak{m}} \sum _{m,n=1}^{\infty}\frac{a_{\pi}(m)\overline{a_{\pi}(n)}}{\sqrt{mn}} \underset{(dr, mn N_{\pi})=1}{\underset{r|(m\pm n)}{\sum _{d,r}} }\mu (d)\varphi (r)\Psi (dr/Q) V(m,n ;dr N_{\pi}^{1/4}).
\end{equation}
We put 
\begin{equation}
\label{20}
\Delta _{\pi}(\Psi ,Q):=\frac{1}{2}  \sum _{m,n=1}^{\infty}\frac{a_{\pi}(m)\overline{a_{\pi}(n)}}{\sqrt{mn}} \underset{(dr, mn N_{\pi})=1}{\underset{r|(m\pm n)}{\sum _{d,r}} }\mu (d)\varphi (r)\Psi (dr/Q) V(m,n ;dr N_{\pi}^{1/4}).
\end{equation}
Then 
\begin{equation}
\label{21}
\underset{(q, N_{\pi})=1}{\sum _{q}}\Psi (q/Q)\sum _{\chi (\mathrm{mod}\; q)}^{\quad \quad \flat}\int  _{-\infty}^{\infty}\left| \Lambda (1/2+iy, \pi \otimes \chi ) \right|^{2} dy=2\pi ^{-\mathfrak{m}}\Delta _{\pi}(\Psi ,Q).
\end{equation}
We approximate $\Delta _{\pi}(\Psi ,Q)$  by 
\begin{equation}
\label{22}
\tilde{\Delta}_{\pi}(\Psi ,Q):=\underset{(q, N_{\pi})=1}{\sum _{q}}\Psi (q/Q)\sum _{\chi (\mathrm{mod}\; q)}^{\quad \quad \flat}\sum _{m,n=1}^{\infty}\frac{a_{\pi}(m)\overline{a_{\pi}(n)}} {\sqrt{mn}}\chi (m)\overline{\chi}(n)V\left(m,n; \frac{qN_{\pi}^{1/4}}{(\log Q)^{\alpha}} \right),
\end{equation}
where $\alpha >0$ is a proper  constant which will be  chosen precisely later. To estimate the difference between $\Delta _{\pi}$ and $\tilde{\Delta }_{\pi}$, we adapt the following large sieve inequality. 
\begin{lem}
For any complex numbers $a_{n}$ $(M\leq n <M+N)$, we have
\[
\sum _{q\leq Q}\frac{q}{\varphi (q)}\sum _{\chi (\mathrm{mod}\; q)}^{\quad \quad *}\left| \sum _{M\leq n <M+N}a_{n}\chi (n) \right|^{2} \leq (Q^{2}+N)\sum _{M\leq n <M+N}|a_{n}|^{2}.
\]
\end{lem}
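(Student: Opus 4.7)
The plan is to derive this multiplicative large sieve inequality for primitive characters from the classical analytic (additive) large sieve via a Gauss-sum/Plancherel argument on $(\mathbb{Z}/q\mathbb{Z})^{*}$ evaluated at Farey fractions of order $Q$. This is the standard Gallagher--Montgomery derivation; no new input is needed beyond combining three well-known ingredients.

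First I would recall the analytic large sieve: for any $\delta$-spaced real numbers $\alpha_{1},\dots,\alpha_{R}$ and coefficients supported on $M\leq n<M+N$,
\[
\sum_{r}|S(\alpha_{r})|^{2}\leq(N+\delta^{-1})\sum_{n}|a_{n}|^{2}, \qquad S(\alpha):=\sum_{n}a_{n}e(n\alpha).
\]
Applying this to the Farey set $\{a/q:q\leq Q,\,(a,q)=1,\,1\leq a\leq q\}$, whose minimum spacing is at least $1/(qq')\geq 1/Q^{2}$, I obtain
\[
\sum_{q\leq Q}\sum_{\substack{a\,(\mathrm{mod}\,q)\\(a,q)=1}}|S(a/q)|^{2}\leq(N+Q^{2})\sum_{n}|a_{n}|^{2}.
\]

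For each primitive $\chi \pmod q$ I would then invoke the Gauss-sum identity $\tau(\chi)\overline{\chi}(n)=\sum_{a\,(\mathrm{mod}\,q)}\chi(a)e(an/q)$, valid for \emph{all} integers $n$ precisely because $\chi$ is primitive. Multiplying by $a_{n}$ and summing over $n$ gives $\tau(\chi)S_{\overline{\chi}}=\sum_{(a,q)=1}\chi(a)S(a/q)$, where $S_{\chi}:=\sum_{n}a_{n}\chi(n)$. Plancherel on $(\mathbb{Z}/q\mathbb{Z})^{*}$ applied to $a\mapsto S(a/q)$, together with $|\tau(\chi)|^{2}=q$ for primitive $\chi$, yields
\[
\sum_{\substack{a\,(\mathrm{mod}\,q)\\(a,q)=1}}|S(a/q)|^{2}=\frac{1}{\varphi(q)}\sum_{\chi\,(\mathrm{mod}\,q)}\Bigl|\sum_{a}\chi(a)S(a/q)\Bigr|^{2}\geq\frac{q}{\varphi(q)}\sum_{\chi\,(\mathrm{mod}\,q)}^{*}|S_{\chi}|^{2},
\]
after discarding the non-primitive contributions and re-indexing $\overline{\chi}\mapsto\chi$. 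Summing over $q\leq Q$ and combining with the Farey bound completes the proof.

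The only step requiring care is the extension of the Gauss-sum identity to \emph{all} integers $n$, not merely those with $(n,q)=1$: this fails for imprimitive characters and is exactly what makes the uniform factor $|\tau(\chi)|^{2}=q$ available on the primitive side. Everything else — Plancherel on the reduced residue group, the $1/Q^{2}$ Farey spacing estimate, and the analytic large sieve itself — is entirely standard and available in, e.g., Iwaniec--Kowalski or Montgomery's \emph{Ten Lectures}.
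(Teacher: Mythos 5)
Your proof is correct and is the standard Gallagher--Montgomery derivation of the multiplicative large sieve from the analytic one; the paper itself states Lemma 2.3 without proof (it is a classical inequality), so there is no in-paper argument to compare against. The chain you give --- Farey spacing $\geq 1/Q^{2}$ plus the analytic large sieve, then the Gauss-sum identity $\tau(\chi)\overline{\chi}(n)=\sum_{a}\chi(a)e(an/q)$ (valid for all $n$ precisely by primitivity), then Parseval on $(\mathbb{Z}/q\mathbb{Z})^{*}$ with $|\tau(\chi)|^{2}=q$, and finally discarding the non-primitive terms --- is exactly the textbook route (Montgomery, \emph{Ten Lectures}, Ch.~3; Iwaniec--Kowalski, Thm.~7.13), and all the steps you flag as needing care are handled correctly.
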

The following lemma gives a bound for the  second moment of $a_{\pi }(n)$.
\begin{lem}
Let $m\geq 2$ be a positive integer and $\pi $ be a  cuspidal automorphic representation of $\textrm{GL}_{m}(\mathbb{A}_{\mathbb{Q}})$ with unitary central character. Let  $a_{\pi}(n)$ be the $n$th Dirichlet coefficient of  $L(s, \pi)$ and ${\cal Q}$ be its analytic conductor.  Then we have
\begin{equation}
\label{D4}
\sum _{x \leq n <2x}|a_{\pi}(n)|^{2}\ll _{\varepsilon ,m}{\cal Q}^{\varepsilon}x +{\cal Q}^{3/2m +\varepsilon}x^{1-1/m^{2}}.
\end{equation}
\end{lem}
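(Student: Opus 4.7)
The plan is to relate this sum to the Rankin--Selberg $L$-function $L(s, \pi \times \tilde\pi)$ and to extract the main term from its simple pole at $s=1$, with the error bound arising from a contour shift and the convexity principle.

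First, by multiplicativity of $a_\pi(n)$, the Dirichlet series
\[
D(s) := \sum_{n \geq 1} \frac{|a_\pi(n)|^2}{n^s}
\]
admits an Euler product whose local factor at each prime agrees with that of $L(s, \pi \times \tilde\pi)$ in the $p^{-s}$-term. Hence one can factor $D(s) = L(s, \pi \times \tilde\pi) H(s)$, where $H(s)$ is an Euler product absolutely and uniformly convergent in a half-plane $\Re(s) > 1/2 + \theta$; the parameter $\theta$ is controlled by the Luo--Rudnick--Sarnak bounds toward the Ramanujan conjecture for $\pi$, and $H(s) \ll {\cal Q}^{\varepsilon}$ uniformly on vertical lines in that half-plane.

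Next, I introduce a smooth dyadic cut-off $\phi$ supported in a neighbourhood of $[1,2]$ with $\phi \equiv 1$ on $[1,2]$, and use Mellin inversion to write $\sum_{n} |a_\pi(n)|^2 \phi(n/x)$ as the contour integral of $D(s)\,\widetilde\phi(s)\,x^s$ along $\Re(s) = 1+\varepsilon$. I then shift the contour leftward to $\Re(s) = \sigma$ for some $\sigma \in (1/2, 1)$. The simple pole of $L(s, \pi \times \tilde\pi)$ at $s=1$ crossed in this shift contributes, after multiplication by $H(1)\widetilde\phi(1)x$, a main term of size $\ll {\cal Q}^{\varepsilon} x$, using the standard upper bound on the residue of $L(s, \pi \times \tilde\pi)$ at $s=1$.

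On the shifted line one invokes the convexity bound for $L(\sigma + it, \pi \times \tilde\pi)$, a degree-$m^2$ $L$-function whose analytic conductor is bounded by a fixed power of ${\cal Q}$ times a power of $(1+|t|)^{m^2}$. Combined with the rapid decay of $\widetilde\phi(\sigma + it)$ in $|t|$ and the uniform bound on $H$, the integral over $\Re(s) = \sigma$ is bounded by $x^{\sigma} {\cal Q}^{C(1-\sigma)+\varepsilon}$ for a constant $C$ depending only on the conductor bound for $\pi \times \tilde\pi$. Balancing the trade-off between $x^{\sigma-1}$ and the ${\cal Q}$-power, one takes $\sigma = 1-1/m^2$ and obtains the claimed error $\ll {\cal Q}^{3/(2m)+\varepsilon} x^{1-1/m^2}$; removing the dyadic smoothing at negligible cost recovers the stated inequality.

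The hard part is obtaining the precise power of ${\cal Q}$ claimed on the shifted line. This requires (i) sharp control of the analytic conductor of $\pi \times \tilde\pi$ in terms of that of $\pi$ via Bushnell--Henniart-type conductor bounds, (ii) handling the growth of $H(s)$ without assuming the full Ramanujan conjecture for $\pi$, and (iii) sharp application of convexity to a degree-$m^2$ $L$-function. The endpoint exponent $1-1/m^2$ reflects the degree $m^2$ of the Rankin--Selberg $L$-function precisely at the convexity barrier.
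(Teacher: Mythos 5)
The high-level strategy you describe—factor $\sum |a_\pi(n)|^2 n^{-s}$ through the Rankin--Selberg $L$-function, extract the pole at $s=1$, and bound the shifted contour by convexity—is in fact the strategy underlying the paper's argument, which is simply to invoke Lemma~5.5 of Brumley--Thorner--Zaman rather than re-derive it. But the crucial step in your sketch, and the one you wave at, is precisely what makes BTZ's lemma nontrivial, and as stated your argument does not close.

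The issue is your claim that $H(s)$ (the ratio of $\sum |a_\pi(n)|^2 n^{-s}$ to $L(s,\pi\times\tilde\pi)$) ``is absolutely and uniformly convergent in a half-plane $\Re(s) > 1/2 + \theta$'' with $\theta$ the Luo--Rudnick--Sarnak bound and $H(s)\ll \mathcal{Q}^\varepsilon$ there. Two problems. First, matching the $p^{-s}$-coefficients only gives $H_p(s) = 1 + O\bigl(p^{4\theta-2s}\bigr)$ a priori, so the Euler product converges only for $\Re(s) > 1/2 + 2\theta$, which exceeds $1$ when $\theta = \theta_m = 1/2 - 1/(m^2+1)$ for every $m\geq 2$; one needs a genuine algebraic cancellation (the $\mathrm{GL}_2$ analogue is the identity $H_p(s) = 1 - |\alpha_1\alpha_2|^2p^{-2s} = 1-p^{-2s}$) to do better, and this is not automatic in higher rank. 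Second, even granting the optimistic half-plane $\Re(s) > 1/2 + \theta_m = 1 - 1/(m^2+1)$, your target contour $\Re(s) = 1-1/m^2$ lies strictly to the \emph{left} of it (since $1/m^2 > 1/(m^2+1)$), so the shift is not legitimate even then. Establishing that $H(s)$ extends and satisfies $H(s) \ll \mathcal{Q}^\varepsilon$ on $\Re(s) = 1-1/m^2$ is exactly the content of Lemma~5.3 of BTZ (cited in the paper's proof), and it is a theorem, not a remark. You also do not address the behaviour of $H_p$ at ramified primes $p\mid N_\pi$, where the local Rankin--Selberg factor does not match the product of local $L$-factors; the paper handles this by first working with $(n,N_\pi)=1$ via BTZ and then removing the coprimality condition using the Luo--Rudnick--Sarnak bound on $|a_\pi(\ell)|$ for $\ell\mid N_\pi$, a step absent from your sketch. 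Until the analytic continuation and uniform bound for $H(s)$ on $\Re(s)=1-1/m^2$ are supplied, the proposal has a genuine gap at its central step.
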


\begin{proof}
The proof of this lemma relies deeply on the results in \cite{BTZ}. Let ${\cal A}(d)$ be the set of cuspidal automorphic representations of $\textrm{GL}_{d}(\mathbb{A}_{\mathbb{Q}})$ with unitary central character and put ${\cal A}=\cup _{d\geq 1}{\cal A}(d)$. For $\pi \in {\cal A}$, we denote by ${\cal Q}(\pi)$ the analytic conductor of $\pi$. For ${\cal Q}>0$, put 
\[
{\cal A}_{m}({\cal Q}):=\{ \pi \in {\cal A}\; \vline \; {\cal Q}(\pi )\leq {\cal Q},\; \pi \in {\cal A}(d)\Rightarrow d \leq m \}.
\] 
(In \cite{BTZ}, a bit more general set ${\cal F}_{m}({\cal Q})$ is defined instead of ${\cal A}_{m}({\cal Q})$. The set ${\cal F}_{m}({\cal Q})$ is made from a subset ${\cal F}\subset {\cal A}$, and ${\cal A}_{m}({\cal Q})$ is obtained by taking ${\cal F}={\cal A}$.) Let $\phi$ be a smooth function compactly supported in $(-2,2)$. Then by Lemma 5.5 (with $d=T=1$, $\pi ^{'}=\pi$) of \cite{BTZ}, for $\pi \in {\cal A}_{m}({\cal Q})$, we have
\begin{align*}
\underset{(n, N_{\pi})=1}{\sum _{n=1}^{\infty}}|a_{\pi}(n)|^{2}\phi (\log n/x)=&g_{1}^{RS}(1, \pi \times \tilde{\pi})H(1, \pi \times \tilde{\pi})x\hat{\phi}(1)\prod _{p|N_{\pi}}L^{RS}(1, \pi _{p}\times \tilde{\pi}_{p})^{-1} \\
& \quad +O_{\varepsilon ,m, \phi}({\cal Q}^{3/2m}x^{1-1/m^{2}}),
\end{align*}
where the coefficients in the main term satisfy 
\[
g_{1}^{RS}(1, \pi \times \tilde{\pi})\ll 1, \quad H(1, \pi \times \tilde{\pi})\ll _{\varepsilon ,m}{\cal Q}^{\varepsilon}, \quad \prod _{p|N_{\pi}}L^{RS}(1, \pi _{p}\times \tilde{\pi}_{p})^{-1}\ll {\cal Q}^{\varepsilon}.
\]
(See Lemma 5.3, Lemma 5.2 and page 20 of \cite{BTZ} respectively.) Hence we have an upper bound 
\[
\underset{(n, N_{\pi})=1}{\sum _{n=1}^{\infty}}|a_{\pi}(n)|^{2}\phi (\log n/x)\ll _{\varepsilon ,m,\phi}{\cal Q}^{\varepsilon}x +{\cal Q}^{3/2m}x^{1-1/m^{2}}.
\]
We choose a nonnegative function $\phi$ satisfying $\phi (t) \geq 1$ for $0\leq t \leq \log 2$. Then the above estimate yields 
\[
\underset{x\leq n <2x}{\underset{(n, N_{\pi})=1}{\sum _{n=1}^{\infty}}}|a_{\pi}(n)|^{2}\ll _{\varepsilon ,m}{\cal Q}^{\varepsilon}x +{\cal Q}^{3/2m}x^{1-1/m^{2}}
\]
for $\pi \in {\cal A}_{m}({\cal Q})$. We need to remove the condition $(n, N_{\pi})=1$. By the above estimate, we have
\begin{align*}
\underset{(n, N_{\pi})>1}{\sum _{x \leq n<2x}}|a_{\pi}(n)|^{2}& \leq  \underset{l|N_{\pi}}{\sum _{l>1}}     \underset{x\leq ln^{'}<2x} {\underset{(n^{'},N_{\pi})=1}{\sum _{n^{'}=1}^{\infty}}}    |a_{\pi}(ln^{'})|^{2}      \\
&=  \underset{l|N_{\pi}}{\sum _{l>1}}|a_{\pi}(l)|^{2}     \underset{x/l\leq n^{'}<2x/l} {\underset{(n^{'},N_{\pi})=1}{\sum _{n^{'}=1}^{\infty}}}    |a_{\pi}(n^{'})|^{2}      \\
& \ll  \underset{l|N_{\pi}}{\sum _{l>1}} |a_{\pi}(l)|^{2} \left( {\cal Q}^{\varepsilon}x/l+{\cal Q}^{3/2m}(x/l)^{1-1/m^{2}} \right).
\end{align*}
The Luo-Rudnick-Sarnak bound (see \cite{LRS}) yields $|a_{\pi}(l)|\ll l^{1/2-1/(m^{2}+1)+\varepsilon}$. Then 
\begin{align*}
 \underset{l|N_{\pi}}{\sum _{l>1}}\frac{|a_{\pi}(l)|^{2}}{l^{1-1/m^{2}}} &\ll _{ \varepsilon ,m}  \underset{l|N_{\pi}}{\sum _{l>1}}l^{-\frac{m^{2}-1}{m^{2}(m^{2}+1)}+\varepsilon}  \leq d(N_{\pi}) \ll _{\varepsilon} N_{\pi}^{\varepsilon}\ll {\cal Q}^{\varepsilon}.
\end{align*}
Hence we also have
\[
 \underset{l|N_{\pi}}{\sum _{l>1}}\frac{|a_{\pi}(l)|^{2}}{l} \ll _{ \varepsilon ,m} {\cal Q}^{\varepsilon}.
\]
Therefore, we have 
\[
\underset{(n, N_{\pi})>1}{\sum _{x \leq n <2x}}|a_{\pi}(n)|^{2}\ll _{\varepsilon ,m} {\cal Q}^{\varepsilon}x +{\cal Q}^{3/2m +\varepsilon}x^{1-1/m^{2}}.
\] 
Combining these estimates we obtain (\ref{D4}). 
\end{proof}
We shall use Lemma 2.4 with ${\cal Q}$ replaced by $N_{\pi}$, since we do not take the size of $\mu_{j}$ in the functional equation into account. By our definitions, 
\begin{equation}
\label{25}
\begin{aligned}
&\Delta _{\pi}(\Psi ,Q)-\tilde{\Delta}_{\pi}(\Psi ,Q) \\
&= \underset{(q, N_{\pi})=1}{\sum _{q}}\Psi (q/Q)\sum _{\chi (\mathrm{mod}\; q)}^{\quad \quad \flat}\sum _{m,n=1}^{\infty}\frac{a_{\pi}(m)\overline{a_{\pi}(n)}\chi (m)\overline{\chi}(n)} {\sqrt{mn}}  \left\{ V(m,n; qN_{\pi}^{1/4})-V \left(m,n; \frac{qN_{\pi}^{1/4}}{(\log Q)^{\alpha}}   \right)   \right\},
\end{aligned}
\end{equation}
and 
\begin{equation}
\label{26}
\begin{aligned}
&  V(m,n; qN_{\pi}^{1/4})-V \left(m,n; \frac{qN_{\pi}^{1/4}}{(\log Q)^{\alpha}}   \right) \\
&\quad =\frac{1}{2\pi i}\int _{-\infty}^{\infty}\left( \frac{n}{m} \right)^{it}\int _{(1)}G(1/2+s,t)\left\{\left( \frac{\pi ^{4}mn}{q^{4}N_{\pi}} \right)^{-s}-\left( \frac{\pi ^{4}mn(\log Q)^{4\alpha}}{q^{4}N_{\pi}} \right)^{-s}   \right\} \frac{ds}{s}dt.
\end{aligned}
\end{equation}
Changing the parameters by $s=(v+z)/2$, $t=(v-z)/(2i)$, the right hand side of (\ref{26}) becomes 
\begin{equation}
\label{27}
\begin{aligned}
& \frac{1}{2\pi}\int _{(1)}\int _{(1)}\left( \frac{n}{m} \right)^{\frac{v-z}{2}}G\left( \frac{1+v+z}{2}, \frac{v-z}{2i} \right) \frac{1}{(mn)^{\frac{v+z}{2}}} \\
& \hspace{3cm} \times \left\{ \left( \frac{qN_{\pi}^{1/4}}{\pi} \right)^{2(v+z)}-\left( \frac{qN_{\pi}^{1/4}}{\pi (\log Q)^{\alpha}} \right)^{2(v+z)}    \right\} \frac{dvdz}{v+z}.
\end{aligned}
\end{equation}
Substituting (\ref{26}), (\ref{27}) into (\ref{25}), we have 
\begin{equation}
\label{28}
\begin{aligned}
& \Delta _{\pi}(\Psi ,Q)-\tilde{\Delta}_{\pi}(\Psi ,Q) \\
& \quad = \frac{1}{2\pi}  \underset{(q, N_{\pi})=1}{\sum _{q}}\Psi (q/Q)\sum _{\chi (\mathrm{mod}\; q)}^{\quad \quad \flat}\sum _{m,n=1}^{\infty} \int _{(1)}\int _{(1)}\frac{a_{\pi}(m)\overline{a_{\pi}(n)}\chi (m)\overline{\chi}(n)}{m^{1/2+v}n^{1/2+z}} \\
& \quad \quad \quad  \times G\left( \frac{1+v+z}{2}, \frac{v-z}{2i} \right)  \left\{ \left( \frac{qN_{\pi}^{1/4}}{\pi} \right)^{2(v+z)}-\left( \frac{qN_{\pi}^{1/4}}{\pi (\log Q)^{\alpha}} \right)^{2(v+z)}    \right\} \frac{dvdz}{v+z}.
\end{aligned}
\end{equation}
Let $\sum ^{d}$ be the dyadic sum and $F_{M}$ be a positive smooth function supported in $[M/2, 3M]$ satisfying $F_{M}^{(j)}(x)\ll _{j}M^{-j}$ $(j \geq 0)$, $1=\sum _{M}^{d}F_{M}(x)$. Then 
\begin{equation}
\label{29}
\begin{aligned}
&  \Delta _{\pi}(\Psi ,Q)-\tilde{\Delta}_{\pi}(\Psi ,Q) \\
& \quad = \frac{1}{2\pi}  \sum _{M}^{\quad \quad d}\sum _{N}^{\quad \quad d}\int _{(1)}\int _{(1)}G\left( \frac{1+v+z}{2}, \frac{v-z}{2i} \right)  \left\{ \left( \frac{QN_{\pi}^{1/4}}{\pi} \right)^{2(v+z)}-\left( \frac{QN_{\pi}^{1/4}}{\pi (\log Q)^{\alpha}} \right)^{2(v+z)}    \right\}  \\
& \quad \times \underset{(q, N_{\pi})=1}{\sum _{q}}\Psi (q/Q) (q/Q)^{2v+2z}\sum _{\chi (\mathrm{mod}\; q)}^{\quad \quad \flat}\sum _{m=1}^{\infty}\frac{a_{\pi}(m)\chi (m)}{m^{1/2+v}}F_{M}(m)\sum _{n=1}^{\infty}\frac{\overline{a_{\pi}(n)}\overline{\chi}(n)}{n^{1/2+z}}F_{N}(n) \frac{dvdz}{v+z}.
\end{aligned}
\end{equation}
By Lemmas 2.3-2.4 with Cauchy-Schwarz inequality, we have 
\begin{align*}
&  \underset{(q, N_{\pi})=1}{\sum _{q}}\Psi (q/Q)\sum _{\chi (\mathrm{mod}\; q)}^{\quad \quad \flat}\sum _{m=1}^{\infty}\frac{a_{\pi}(m)\chi (m)}{m^{1/2+v}}F_{M}(m)\sum _{n=1}^{\infty}\frac{\overline{a_{\pi}(n)}\overline{\chi}(n)}{n^{1/2+z}}F_{N}(n)  \\
& \quad \leq \left(  \sum _{q}\Psi (q/Q)\sum _{\chi (\mathrm{mod}\; q)}^{\quad \quad \flat}\left| \sum _{m=1}^{\infty}\frac{a_{\pi}(m)\chi (m)}{m^{1/2+v}}F_{M}(m) \right|^{2} \right)^{1/2} \\
& \quad \quad \quad  \times \left(  \sum _{q}\Psi (q/Q)\sum _{\chi (\mathrm{mod}\; q)}^{\quad \quad \flat}\left| \sum _{n=1}^{\infty}\frac{\overline{a_{\pi}(n)}\overline{\chi} (n)}{n^{1/2+z}}F_{N}(n) \right|^{2} \right)^{1/2} \\
& \quad  \ll (Q^{2}+M)^{1/2}(Q^{2}+N)^{1/2}\left( \sum _{M/2 \leq m \leq 3M}\frac{|a_{\pi}(m)|^{2}}{m^{1+2\Re (v)}} \right)^{1/2} \left( \sum _{N/2 \leq n \leq 3N}\frac{|a_{\pi}(n)|^{2}}{n^{1+2\Re (z)}} \right)^{1/2}  \\
& \quad \ll N_{\pi}^{\varepsilon}(Q^{2}+M)^{1/2}(Q^{2}+N)^{1/2}M^{-\Re (v)}N^{-\Re (z)}(1+N_{\pi}^{3/8}M^{-1/16})^{1/2} (1+N_{\pi}^{3/8}N^{-1/16})^{1/2}. 
\end{align*}
We use this to estimate the integral in (\ref{29}). It might be helpful to keep in mind that the function $G((1+v+z)/2, (v-z)/2i)$ in (\ref{29}) does not have any pole in the domain 
\begin{equation}
\label{Ghol}
\Re \left( \frac{1+v+z}{2} \right)- \left| \Re \left( \frac{v-z}{2} \right) \right| >0.
\end{equation}
We consider the following cases. \\
{\bf Case I}. $M>Q^{2}$. (By symmetry, the case $N>Q^{2}$ is the same.) We do not shift the integral over $v$. The integration over $z$ is shifted to $\Re (z)=l$, where $l=-1/4$ if $N\leq Q^{2}$ and otherwise $l=15/32 +\varepsilon$. We do not encounter the poles of $G$. The integral in (\ref{29}) is bounded by 
\begin{align*}
& \int _{(l)}\int _{(1)}\left| G\left( \frac{1+v+z}{2}, \frac{v-z}{2i} \right) \right|(QN_{\pi}^{1/4})^{2l+2} \\
& \quad \times N_{\pi}^{\varepsilon}(Q^{2}+M)^{1/2}(Q^{2}+N)^{1/2}M^{-1}N^{-l}(1+N_{\pi}^{3/8}M^{-1/16})^{1/2} (1+N_{\pi}^{3/8}N^{-1/16})^{1/2}|dv||dz| \\
& \quad \ll \frac{Q^{2+2l}N_{\pi}^{\frac{l+1}{2}+\varepsilon}}{MN^{l}}M^{1/2}(Q^{2}+N)^{1/2}(1+N_{\pi}^{3/8}M^{-1/16})^{1/2} (1+N_{\pi}^{3/8}N^{-1/16})^{1/2}.
\end{align*}
Hence the contribution of this to (\ref{29}) is at most 
\begin{equation}
\label{delta1}
\begin{aligned}
& \sum _{M>Q^{2}}^{\quad \quad d}\sum _{N\leq Q^{2}}^{\quad \quad d}\frac{Q^{3/2}N_{\pi}^{\frac{3}{8}+\varepsilon}}{M}M^{1/2}N^{1/4}(Q^{2}+N)^{1/2}(1+N_{\pi}^{3/8}M^{-1/16})^{1/2}(1+N_{\pi}^{3/8}N^{-1/16})^{1/2} \\
&+\sum _{M>Q^{2}}^{\quad \quad d}\sum _{N>Q^{2}}^{\quad \quad d}\frac{Q^{47/16}N_{\pi}^{47/64 +\varepsilon}}{M^{1/2}}N^{1/32-\varepsilon}(1+N_{\pi}^{3/8}M^{-1/16})^{1/2}(1+N_{\pi}^{3/8}N^{-1/16})^{1/2} \\
& \quad \ll Q^{2}N_{\pi}^{47/64 +\varepsilon}(1+N_{\pi}^{3/8}Q^{-1/8}).
\end{aligned}
\end{equation}

\noindent
{\bf Case II}. $Q^{2}/(\log Q)^{2\alpha}<M\leq Q^{2}$, $N\leq Q^{2}$. (By symmetry, the case $M\leq Q^{2}$, $Q^{2}/(\log Q)^{2\alpha}<N\leq Q^{2}$ is the same.)\\
We shift the lines of integration to $\Re (v)=0$, $\Re (z)=l$, where $l=0$ if $Q^{2}/(\log Q)^{2\alpha}<N\leq Q^{2}$ and otherwise $l=-1/4$. Then we do not encounter the poles of the integrand. The integral in (\ref{29}) is at most
\begin{align*}
& \int _{(l)}\int _{(0)}\left| G \left( \frac{1+v+z}{2}, \frac{v-z}{2i} \right) \right| (QN_{\pi}^{1/4})^{2l}(\log Q)^{\varepsilon} \\
& \quad \times Q^{2}N^{-l}N_{\pi}^{\varepsilon}(1+N_{\pi}^{3/8}M^{-1/16})^{1/2} (1+N_{\pi}^{3/8}N^{-1/16})^{1/2} |dv| |dz| \\
& \quad \ll \frac{Q^{2+2l}N_{\pi}^{l/2+\varepsilon}(\log Q)^{\varepsilon}}{N^{l}}(1+N_{\pi}^{3/8}M^{-1/16})^{1/2} (1+N_{\pi}^{3/8}N^{-1/16})^{1/2}.
\end{align*}
Hence the contribution of this to (\ref{29}) is at most 
\begin{align*}
& \sum _{\frac{Q^{2}}{(\log Q)^{2\alpha}}<M\leq Q^{2}}^{\quad \quad d} \sum _{\frac{Q^{2}}{(\log Q)^{2\alpha}}<N\leq Q^{2}}^{\quad \quad d} Q^{2}(\log Q)^{\varepsilon}N_{\pi}^{\varepsilon} (1+N_{\pi}^{3/8}M^{-1/16})^{1/2} (1+N_{\pi}^{3/8}N^{-1/16})^{1/2} \\
& \quad + \sum _{\frac{Q^{2}}{(\log Q)^{2\alpha}}<M\leq Q^{2}}^{\quad \quad d}\sum _{1\leq N \leq \frac{Q^{2}}{(\log Q)^{2\alpha}}}^{\quad \quad d}Q^{3/2}(\log Q)^{\varepsilon}N^{1/4}N_{\pi}^{-1/8+\varepsilon} (1+N_{\pi}^{3/8}M^{-1/16})^{1/2} (1+N_{\pi}^{3/8}N^{-1/16})^{1/2} \\
& \quad \ll Q^{2}N_{\pi}^{3/8+\varepsilon}(\log Q)^{\varepsilon}.
\end{align*}
\noindent
{\bf Case III}. $M \leq Q^{2}/(\log Q)^{2\alpha}$, $N\leq Q^{2}/(\log Q)^{2\alpha}$. We shift the paths of integration in (\ref{29}) to $\Re (v)=\Re (z)=-1/4$. Then we do not cross the poles of the integrand. The new integral is at most 
\begin{align*}
& \int _{(-1/4)}\int _{(-1/4)}\left| G \left( \frac{1+v+z}{2}, \frac{v-z}{2i} \right) \right|(QN_{\pi}^{1/4})^{-1}(\log Q)^{\alpha} \\
& \quad \quad \times Q^{2}M^{1/4}N^{1/4}N_{\pi}^{\varepsilon} (1+N_{\pi}^{3/8}M^{-1/16})^{1/2} (1+N_{\pi}^{3/8}N^{-1/16})^{1/2} |dv| |dz| \\
& \ll Q(\log Q)^{\alpha} M^{1/4}N^{1/4}N_{\pi}^{-1/4+\varepsilon}(1+N_{\pi}^{3/8}M^{-1/16})^{1/2} (1+N_{\pi}^{3/8}N^{-1/16})^{1/2}.
\end{align*}
Hence the contribution of this part to (\ref{29}) is at most 
\begin{align*}
& \sum _{M\leq \frac{Q^{2}}{(\log Q)^{2\alpha}}}^{\quad \quad d}\sum _{N\leq \frac{Q^{2}}{(\log Q)^{2\alpha}}}^{\quad \quad d}Q(\log Q)^{\alpha} M^{1/4}N^{1/4}N_{\pi}^{-1/4+\varepsilon}(1+N_{\pi}^{3/8}M^{-1/16})^{1/2}(1+N_{\pi}^{3/8}N^{-1/16})^{1/2} \\
& \quad   \ll Q^{2}N_{\pi}^{1/8 +\varepsilon}. 
\end{align*}
Summing up, we have the following conclusion.
\begin{prop}
We have 
\[
\Delta _{\pi}(\Psi ,Q)-\tilde{\Delta}_{\pi}(\Psi ,Q)\ll Q^{2}(\log Q)^{\varepsilon} N_{\pi}^{47/64 +\varepsilon}(1+N_{\pi}^{3/8}Q^{-1/8}).
\]
\end{prop}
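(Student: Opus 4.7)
The plan is to combine the three case analyses carried out above into a single clean bound. Starting from the formula \eqref{29}, which expresses $\Delta _{\pi}(\Psi ,Q)-\tilde{\Delta}_{\pi}(\Psi ,Q)$ as a dyadic sum over $M,N$ of a double contour integral in $v,z$, one first bounds the inner quantity (the character sum over $q$ and $\chi$) by combining Cauchy–Schwarz, the large sieve (Lemma 2.3), and the second-moment estimate for $a_{\pi}(n)$ (Lemma 2.4 with $\mathcal{Q}=N_{\pi}$, $m=4$). This produces the uniform bound
\[
\ll N_{\pi}^{\varepsilon}(Q^{2}+M)^{1/2}(Q^{2}+N)^{1/2}M^{-\Re (v)}N^{-\Re (z)}(1+N_{\pi}^{3/8}M^{-1/16})^{1/2}(1+N_{\pi}^{3/8}N^{-1/16})^{1/2},
\]
valid for any admissible choice of $\Re(v),\Re(z)$ consistent with the holomorphy domain \eqref{Ghol}.

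The next step is to shift the contours in $v,z$ to extract savings. The contours must be chosen in the region \eqref{Ghol} where $G$ has no poles, so that no residues arise; in particular the shifts must respect $\Re\bigl((1+v+z)/2\bigr)>|\Re((v-z)/2)|$. Three regimes are needed: in \textbf{Case I} ($M>Q^{2}$ or symmetrically $N>Q^{2}$) one leaves $v$ on the line $\Re(v)=1$ and shifts $z$ to $\Re(z)=-1/4$ or $15/32+\varepsilon$ depending on whether $N\leq Q^{2}$, using the polynomial decay of $G$ on vertical lines. In \textbf{Case II} ($Q^{2}/(\log Q)^{2\alpha}<M\leq Q^{2}$, $N\leq Q^{2}$) one shifts to $\Re(v)=0$, $\Re(z)\in\{0,-1/4\}$, and the saving comes from the length of the $M,N$-range being only $O((\log Q)^{\varepsilon})$ dyadic scales once we restrict to $M,N>Q^{2}/(\log Q)^{2\alpha}$. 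In \textbf{Case III} ($M,N\leq Q^{2}/(\log Q)^{2\alpha}$) one shifts both to $\Re(v)=\Re(z)=-1/4$, where the factor $\{(qN_{\pi}^{1/4}/\pi)^{2(v+z)}-(qN_{\pi}^{1/4}/(\pi(\log Q)^{\alpha}))^{2(v+z)}\}$ in \eqref{29} gains a factor $(\log Q)^{\alpha}$, which balances the deterioration from moving the contour negatively.

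Assembling the dyadic sums over $M$ and $N$ in each case produces the partial bounds
\[
\ll Q^{2}N_{\pi}^{47/64+\varepsilon}(1+N_{\pi}^{3/8}Q^{-1/8}),\qquad \ll Q^{2}N_{\pi}^{3/8+\varepsilon}(\log Q)^{\varepsilon},\qquad \ll Q^{2}N_{\pi}^{1/8+\varepsilon}
\]
respectively. Since $47/64>3/8>1/8$, Case I dominates the $N_{\pi}$-dependence while Case II contributes the $(\log Q)^{\varepsilon}$ factor; taking the maximum across the three regimes yields the stated bound. The main obstacle in this proof is the contour management in Case I: one must balance the polynomial loss $M^{-\Re(v)}N^{-\Re(z)}$ against the size factor $(QN_{\pi}^{1/4})^{2\Re(v+z)}$ and the Luo–Rudnick–Sarnak correction $(1+N_{\pi}^{3/8}N^{-1/16})^{1/2}$, and the optimal exponent $15/32+\varepsilon$ for $\Re(z)$ in the subregime $N>Q^{2}$ is precisely what produces the $47/64$ in the final estimate.
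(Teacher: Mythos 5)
Your proposal reproduces the paper's argument essentially verbatim: same dyadic decomposition of \eqref{29}, same combination of Cauchy--Schwarz, the large sieve (Lemma 2.3), and the $a_\pi$-second-moment bound (Lemma 2.4 with $m=4$, $\mathcal{Q}=N_\pi$), the same three contour-shift regimes with identical choices of $\Re(v),\Re(z)$ in each, and the same three partial bounds whose maximum gives the result. This matches the paper's proof.
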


We estimate
\begin{equation}
\label{32}
\tilde{\Delta}_{\pi}(\Psi ,Q)=\frac{1}{2}\sum _{m,n=1}^{\infty}\frac{a_{\pi}(m)\overline{a_{\pi}(n)}}{\sqrt{mn}}\underset{(dr, mn N_{\pi})=1}{\underset{r|(m \pm n)}{\sum _{d,r}}     }\mu (d)\varphi (r)\Psi (dr/Q)V\left( m,n;\frac{drN_{\pi}^{1/4}}{(\log Q)^{\alpha}} \right).
\end{equation}
For some constant $\delta >0$, put 
\[
D=(\log Q)^{\delta}.
\]
We decompose $\tilde{\Delta}_{\pi}(\Psi ,Q)$ by 
\begin{equation}
\label{33}
\tilde{\Delta}_{\pi}(\Psi ,Q)={\cal D}_{\pi}(\Psi ,Q)+{\cal S}_{\pi}(\Psi ,Q)+{\cal G}_{\pi}(\Psi ,Q),
\end{equation}
where ${\cal D}_{\pi}$ denotes  the terms with $m=n$, ${\cal S}_{\pi}$ denotes  the terms with $m\neq n$, $d\leq D$ and ${\cal G}_{\pi}$ denotes the remaining terms.


\section{The computation of ${\cal D}_{\pi}(\Psi ,Q)$}
By the definition of $V(\xi, \eta ; \mu)$, we have
\begin{equation}
\label{34}
\begin{aligned}
& {\cal D}_{\pi}(\Psi ,Q) \\
& \quad =\underset{(q, N_{\pi})=1}{\sum _{q}}\Psi (q/Q)\varphi ^{\flat}(q)\underset{(n,q)=1}{\sum _{n=1}^{\infty}}\frac{|a_{\pi}(n)|^{2}}{n}V\left(n,n; \frac{qN_{\pi}^{1/4}}{(\log Q)^{\alpha}} \right) \\
& \quad =\underset{(q, N_{\pi})=1}{\sum _{q}}\varphi ^{\flat}(q)\Psi (q/Q)\frac{1}{2\pi i}\int _{-\infty}^{\infty}\int _{(1)}\left( \underset{(n,q)=1}{\sum _{n=1}^{\infty}}\frac{|a_{\pi}(n)|^{2}}{n^{1+2s}} \right)G(1/2+s, t)\left( \frac{qN_{\pi}^{1/4}}{\pi (\log Q)^{\alpha}} \right)^{4s}\frac{ds}{s}dt.
\end{aligned}
\end{equation}
The sum over $n$  in (\ref{34}) can be computed by using the recipe in \cite{CFKRS}, p.53-60. Define the sequence $(a_{n})$ by $a_{n}:=a_{\pi}(n)$ if $(n, q)=1$ and otherwise $a_{n}:=0$. Then the function ${\cal L}_{p}(x)=\sum _{n=0}^{\infty}a_{p^{n}}x^{n}$ in \cite{CFKRS} is given by 
\[
{\cal L}_{p}(x)=\begin{cases}
\prod _{j=1}^{4}(1-\alpha _{j}(p)x)^{-1} &(p\nmid q) \\
1 &(p\; |\; q),
\end{cases}
\]
where $\alpha _{j}(p)$ is the Satake parameter in the Euler product of $L(s, \pi)$. By the argument in \cite{CFKRS}, we obtain the following formula. 
\begin{lem}[\cite{CFKRS}, Theorem 2.4.1]
Let $\delta$ be an arbitrarily fixed positive number. For $\Re (s)>-1/4+\delta$, we have 
\begin{equation}
\label{35}
\underset{(n,q)=1}{\sum _{n=1}^{\infty}}\frac{|a_{\pi}(n)|^{2}}{n^{1+2s}}=\frac{\zeta (1+2s)A(1/2+s)}{B_{q}(1/2+s)},
\end{equation}
where 
\begin{equation}
\label{A}
A(s)=\prod _{p}(1-p^{-2s})B_{p}(s),
\end{equation}
\begin{equation}
\label{Bq}
B_{q}(s)=\prod _{p|q}B_{p}(s)
\end{equation}
with
\begin{equation}
\label{Bp}
B_{p}(s)=\int _{0}^{1}\prod _{j=1}^{4}(1-\alpha _{j}(p)e(\theta)p^{-s})^{-1}(1-\overline{\alpha _{j}(p)}e(-\theta)p^{-s})^{-1}d\theta .
\end{equation}
\end{lem}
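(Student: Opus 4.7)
The plan is to compute the left side as an Euler product, identify the local factors with $B_p(1/2+s)$, and then rearrange into the form stated on the right. Since $a_\pi$ is multiplicative and the condition $(n,q)=1$ splits over primes,
\[
\sum_{(n,q)=1}\frac{|a_\pi(n)|^2}{n^{1+2s}}=\prod_{p\nmid q}F_p(s),\qquad F_p(s):=\sum_{k\ge 0}\frac{|a_\pi(p^k)|^2}{p^{k(1+2s)}},
\]
valid for $\Re(s)>0$ by absolute convergence. The problem reduces to evaluating each $F_p(s)$ and matching the resulting product against the proposed expression.

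The crux of the local calculation is a Fourier orthogonality trick. Since $a_\pi(p^k)$ is the degree-$k$ complete homogeneous symmetric polynomial in the Satake parameters, one has the generating function identity $\sum_k a_\pi(p^k) z^k=\prod_j(1-\alpha_j(p)z)^{-1}$. Substituting $z=e(\theta)p^{-1/2-s}$ in one copy and the conjugate series in $\bar{z}=e(-\theta)p^{-1/2-s}$, then integrating over $\theta\in[0,1]$ and using $\int_0^1 e((k-m)\theta)\,d\theta=\delta_{k,m}$, the off-diagonal terms drop and one obtains $B_p(1/2+s)=F_p(s)$. With this identification, the required right side follows from the algebraic rearrangement
\[
\prod_{p\nmid q}B_p(1/2+s)=\zeta(1+2s)\cdot\frac{\prod_p(1-p^{-1-2s})B_p(1/2+s)}{\prod_{p\mid q}B_p(1/2+s)}=\frac{\zeta(1+2s)\,A(1/2+s)}{B_q(1/2+s)},
\]
obtained by inserting $1=\zeta(1+2s)\prod_p(1-p^{-1-2s})$ and multiplying and dividing by $\prod_{p\mid q}B_p(1/2+s)$; this proves the identity for $\Re(s)>0$.

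The main obstacle, and the reason for invoking the CFKRS recipe, is the extension to the strip $\Re(s)>-1/4+\delta$. One must verify that $A(s)=\prod_p(1-p^{-2s})B_p(s)$ defines a holomorphic function in a half-plane reaching past the natural abscissa $\Re(s)=1/2$ down to $\Re(s)>1/4+\delta$. Expanding the local factor as $(1-p^{-2s})B_p(s)=1+(|a_\pi(p)|^2-1)p^{-2s}+O(p^{-4\Re(s)})$, the leading correction $\sum_p(|a_\pi(p)|^2-1)p^{-2s}$ is essentially $\log L(2s,\pi\times\tilde\pi)-\log\zeta(2s)$, which is analytic in the required strip because this $L$-function ratio is both holomorphic and nonvanishing there. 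The tail contributions $\sum_p p^{-4\Re(s)}|a_\pi(p^2)|^2+\cdots$ are controlled by the temperedness bound $|\alpha_j(p)|=1$ at unramified primes and by the stated Ramanujan condition at ramified primes. Assembling these pieces to obtain analyticity in the enlarged strip is the technical content of [CFKRS, Theorem 2.4.1], and this is the step where the tempered hypothesis on $\pi$ is used in earnest.
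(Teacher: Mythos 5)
Your approach is genuinely different from the paper's: the paper proves nothing here, but only sets up notation and cites \cite{CFKRS}, Theorem 2.4.1, whereas you give a self-contained derivation. The Euler factorization, the Fourier orthogonality computation identifying $B_p(1/2+s)$ with the local factor $\sum_{k\ge 0}|a_\pi(p^k)|^2 p^{-k(1+2s)}$, and the algebraic rearrangement via $\zeta(1+2s)\prod_p(1-p^{-1-2s})=1$ are all correct.

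There is, however, a gap in your analytic continuation step. You assert that $L(2s,\pi\times\tilde\pi)/\zeta(2s)$ is ``both holomorphic and nonvanishing'' for $\Re(s)>1/4+\delta$. Neither property is known: in the range $1/2<\Re(2s)<1$, possible zeros of $\zeta$ would produce poles of the ratio (ruling them out is a quasi-Riemann-hypothesis statement), and nonvanishing of $L(2s,\pi\times\tilde\pi)$ in the critical strip is equally open. Moreover, unlike the Dirichlet-character case treated in \cite{CFKRS} where $|a_\chi(p)|^2=1$ kills the $p^{-2s}$ term, here $|a_\pi(p)|^2-1$ is typically nonzero, so the Euler product defining $A(s)$ does not converge absolutely past $\Re(s)=1/2$, and $A(s)$ cannot simply be declared holomorphic there. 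The unconditional statement is that $A(s)=\dfrac{L(2s,\pi\times\tilde\pi)}{\zeta(2s)}R(s)$ with $R(s)$ an absolutely convergent Euler product for $\Re(s)>1/4$, so $A(s)$ continues \emph{meromorphically}, not necessarily holomorphically, past $\Re(s)=1/2$. This is still enough for (\ref{35}), because the combination $\zeta(1+2s)A(1/2+s)=L(1+2s,\pi\times\tilde\pi)R(1/2+s)$ appearing on the right-hand side is holomorphic for $\Re(s)>-1/4$ apart from the simple pole at $s=0$, which is exactly what the identity and the subsequent contour shift in Section 3 require. You should replace the holomorphy/nonvanishing claim with this factorization; once that is done your argument goes through.
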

Substituting (\ref{35}) into (\ref{34}), we obtain 
\begin{equation}
\label{36}
\begin{aligned}
& {\cal D}_{\pi}(\Psi ,Q) \\
& \quad =\underset{(q, N_{\pi})=1}{\sum _{q}}\varphi ^{\flat}(q)\Psi (q/Q)\frac{1}{2\pi i}\int _{-\infty}^{\infty}\int _{(1)}\zeta (1+2s)\frac{A(1/2+s)}{B_{q}(1/2+s)}G(1/2+s, t)\left( \frac{qN_{\pi}^{1/4}}{\pi (\log Q)^{\alpha}} \right)^{4s}\frac{ds}{s}dt.
\end{aligned}
\end{equation}
We shift the contour of the $s$-integral to $\Re (s)=-1/4+\delta$ $(\delta >0)$. Then we cross the pole at $s=0$ of order 2. The residue at $s=0$ is 
\begin{equation}
\label{37}
\begin{aligned}
& \frac{d}{ds}\left(s^{2} \zeta (1+2s)\frac{A(1/2+s)}{B_{q}(1/2+s)}G(1/2+s, t)\left( \frac{qN_{\pi}^{1/4}}{\pi (\log Q)^{\alpha}} \right)^{4s}\frac{1}{s}  \right)\vline _{s=0} \\
& \quad =2\frac{A(1/2)}{B_{q}(1/2)}G(1/2, t)\log q +O\left(\left(|G(1/2, t)|+|\frac{\partial G}{\partial s}(1/2, t)|\right)(\log N_{\pi})(\log Q)^{\varepsilon}   \right)
\end{aligned}
\end{equation}
for $q \asymp Q$. The new integral is small enough.  By (\ref{36}) and (\ref{37}) we obtain the following conclusion. 
\begin{prop}
We have
\begin{equation}
\label{38}
{\cal D}_{\pi}(\Psi ,Q)=2 \int _{-\infty}^{\infty}G(1/2,t) dt \underset{(q,N_{\pi})=1}{\sum _{q}}\varphi ^{\flat}(q)\Psi (q/Q)\frac{A(1/2)}{B_{q}(1/2)}\log q +O(Q^{2}(\log N_{\pi})(\log Q)^{\varepsilon}),
\end{equation}
where $A(s)$, $B_{q}(s)$ are defined by (\ref{A}), (\ref{Bq}) respectively.
\end{prop}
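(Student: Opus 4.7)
The plan is to start from formula (\ref{36}) and shift the contour of the $s$-integral from $\Re(s)=1$ to $\Re(s)=-1/4+\delta$ for a small fixed $\delta>0$, the strip of validity guaranteed by Lemma 3.1. In this strip the integrand has only one singularity: a pole of order $2$ at $s=0$ arising from the simple pole of $\zeta(1+2s)$ combined with the factor $1/s$. The remaining factors $A(1/2+s)/B_q(1/2+s)$ and $G(1/2+s,t)$ are holomorphic there, the latter by the Selberg bound on the $\mu_j$ together with the observation following (\ref{Ghol}).

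To extract the residue I would write the integrand as $F(s,t)/s^2$ with
\[
F(s,t):=\frac{1}{2}\bigl(2s\zeta(1+2s)\bigr)\frac{A(1/2+s)}{B_q(1/2+s)}G(1/2+s,t)\left(\frac{qN_\pi^{1/4}}{\pi(\log Q)^\alpha}\right)^{4s},
\]
so that $\mathrm{Res}_{s=0}=\partial_s F(s,t)|_{s=0}$. Since $2s\zeta(1+2s)=1+O(s)$ near $s=0$, this residue equals $F(0,t)$ multiplied by the sum of the logarithmic derivatives at $s=0$ of the four factors. The dominant contribution comes from differentiating $(qN_\pi^{1/4}/(\pi(\log Q)^\alpha))^{4s}$, which produces $4\log q$ and yields the announced main term $2\frac{A(1/2)}{B_q(1/2)}G(1/2,t)\log q$ as in (\ref{37}). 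The remaining logarithmic derivatives contribute $\log N_\pi$ (from $\log N_\pi^{1/4}$), $\log\log Q$ (from $\log(\log Q)^\alpha$), $\partial_s G/G$ at $(1/2,t)$ (of size $\log(|t|+2)+\log|\mu_j|$ by Stirling), and $A'/A-B_q'/B_q$ at $1/2$ (bounded by $O(\log\log q)$ since $B_q$ is a finite Euler product over $p\mid q$ with local logarithmic derivatives of size $O(1/p)$ by the Ramanujan bound). All of these are absorbed into the error displayed in (\ref{37}).

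Integrating over $t$ next, the exponential decay of $G$ and $\partial_s G$ along the critical line makes the relevant $t$-integrals converge absolutely, producing the main term $2\int_{-\infty}^\infty G(1/2,t)\,dt\cdot\frac{A(1/2)}{B_q(1/2)}\log q$ together with an error $O((\log N_\pi)(\log Q)^\varepsilon)$ per $q$. Multiplying by $\varphi^\flat(q)\ll q$ and summing over the $O(Q)$ values of $q$ in the support of $\Psi(q/Q)$ then yields the total error $O(Q^2(\log N_\pi)(\log Q)^\varepsilon)$. It remains to verify that the shifted integral on $\Re(s)=-1/4+\delta$ is negligible: there $(qN_\pi^{1/4}/(\pi(\log Q)^\alpha))^{4s}$ has modulus $\ll Q^{-1+4\delta}N_\pi^{-1/4+\delta}(\log Q)^{O(\alpha)}$, $\zeta(1+2s)$ grows polynomially in $|t|$, $A(1/2+s)/B_q(1/2+s)\ll N_\pi^\varepsilon$ by standard bounds, and $G(1/2+s,t)$ retains exponential decay in $|t|$; summing over $q\asymp Q$ with weight $\varphi^\flat(q)\asymp Q$ bounds this contribution by $\ll Q^{1+4\delta}N_\pi^{\varepsilon}$, well within the target.

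The step requiring the most care will be the residue computation itself, in particular verifying that the logarithmic derivative of $B_q(1/2+s)$ at $s=0$ is uniformly polylogarithmic in $q$ so that the main-term coefficient $2A(1/2)/B_q(1/2)$ is not polluted by an extra $\log q$, and that the $\mu_j$-dependence appearing in $\partial_s G/G$ is absorbed into a clean $\log N_\pi$ factor, so that the advertised error $O(Q^2(\log N_\pi)(\log Q)^\varepsilon)$ is genuinely achieved rather than something weaker.
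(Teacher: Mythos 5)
Your proposal is correct and follows essentially the same route as the paper: substitute Lemma 3.1's Dirichlet-series identity into (\ref{34}), shift the $s$-contour to $\Re(s)=-1/4+\delta$, extract the order-two residue at $s=0$ (whose dominant piece comes from differentiating the $q^{4s}$ factor), bound the shifted integral, then integrate in $t$ and sum over $q\asymp Q$. You merely spell out the logarithmic-derivative bookkeeping and the bound on the shifted integral that the paper leaves implicit in the phrase ``the new integral is small enough.''
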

We assume $|\alpha _{j}(p)|=1$ for $j=1, \ldots ,4$ and  compute 
\[
B_{p}(1/2)=\int _{0}^{1}\prod _{j=1}^{4}(1-\alpha _{j}(p)e(\theta)p^{-1/2})^{-1} \prod _{j=1}^{4}(1-\overline{\alpha _{j}(p)}e(-\theta)p^{-1/2})^{-1} d\theta .
\]
Put $z=e(\theta)=e^{2\pi i \theta}$. Then $z$ moves on the unit circle $C=\{z=e^{2\pi i \theta}|0\leq \theta \leq 2\pi \}$ and 
\begin{align*}
B_{p}(1/2)=&\int _{C}\prod _{j=1}^{4}(1-\alpha _{j}(p)p^{-1/2}z)^{-1} \prod _{j=1}^{4}(1-\overline{\alpha _{j}(p)}p^{-1/2}z^{-1})^{-1} \frac{dz}{2\pi iz} \\
=&\frac{1}{2\pi i}\int _{C}\frac{ z^{3} }{ \prod _{j=1}^{4}(1-\alpha _{j}(p)p^{-1/2}z) \prod _{j=1}^{4}(z-\overline{\alpha _{j}(p)}p^{-1/2}) } dz.
\end{align*}
To compute the integration above, we temporarily assume that $\alpha _{i}(p)\neq \alpha _{j}(p)$ whenever $i\neq j$.  (Due to the continuity, we may forget this assumption after the residual computation.)  Since the integrand has totally 4 poles of order $1$ at $z=\overline{\alpha _{k}(p)}p^{-1/2}$ $(k=1, \ldots ,4)$, we have 
\begin{align*}
B_{p}(1/2)=& \sum _{k=1}^{4} \mathrm{Res} _{z=\overline{\alpha _{k}(p)}p^{-1/2}  }\left(\frac{z^{3}}{\prod _{j=1}^{4}(1-\alpha _{j}(p)p^{-1/2}z) \prod _{j=1}^{4}(z-\overline{\alpha _{j}(p)}p^{-1/2})   }    \right) \\
=&  \sum _{k=1}^{4} \frac{\overline{\alpha _{k}(p)}^{3}}{\prod _{j=1}^{4}(1-\alpha _{j}(p)\overline{\alpha _{k}(p)}p^{-1})\underset{j\neq k}{\prod _{j=1}^{4}}(\overline{\alpha _{k}(p)}-\overline{\alpha _{j}(p)})} \\
=& p^{4}\sum _{k=1}^{4}\frac{\alpha _{1}(p) \alpha _{2}(p) \alpha _{3}(p) \alpha _{4}(p) \alpha _{k}(p)^{3}}{\prod _{j=1}^{4}(p\alpha _{k}(p)-\alpha _{j}(p))\underset{j\neq k}{\prod _{j=1}^{4}}(\alpha _{j}(p)-\alpha _{k}(p))   }.
\end{align*}
We write this by 
\[
B_{p}(1/2)=\frac{p^{4} \alpha _{1}(p) \alpha _{2}(p) \alpha _{3}(p) \alpha _{4}(p) }{p-1}\frac{N_{\pi}(p)}{D_{\pi}(p)},
\]
where 
\[
D_{\pi}(p):= \underset{j\neq k}{\prod _{j,k=1}^{4}}(p\alpha _{k}(p)-\alpha _{j}(p)).
\]
Let
\[
s_{1}=\alpha _{1}(p)+\ldots +\alpha _{4}(p), \quad s_{2}=\alpha _{1}(p)\alpha _{2}(p)+\ldots +\alpha _{3}(p)\alpha _{4}(p),
\]
\[
s_{3}=\alpha _{1}(p)\alpha _{2}(p)\alpha _{3}(p)+\ldots +\alpha _{2}(p)\alpha _{3}(p)\alpha _{4}(p), \quad s_{4}=\alpha _{1}(p)\alpha _{2}(p)\alpha _{3}(p)\alpha _{4}(p)
\]
be the basic symmetric polynomials of $\alpha _{1}(p), \ldots ,\alpha _{4}(p)$. Then by numerical computation, we see that  $N_{\pi}(p)$ and $D_{\pi}(p)$ have the expressions
\[
N_{\pi}(p)=\sum _{i=0}^{9}f_{i}(s_{1},s_{2},s_{3},s_{4})p^{i}, \quad D_{\pi}(p)=\sum _{i=0}^{12}g_{i}(s_{1},s_{2},s_{3},s_{4})p^{i},
\]
where $f_{i}$ and $g_{i}$ are given in Remark 1.2. Put 
\[
p_{r}:=a_{\pi}(p^{r})=\sum _{k_{1}+k_{2}+k_{3}+k_{4}=r}\alpha _{1}(p)^{k_{1}}\alpha _{2}(p)^{k_{2}}\alpha _{3}(p)^{k_{3}}\alpha _{4}(p)^{k_{4}}.
\]
Then $s_{1}, \ldots ,s_{4}$ are expressed by 
\[
s_{1}=p_{1}, \quad s_{2}=p_{1}^{2}-p_{2},
\]
\[
s_{3}=p_{1}^{3}-2p_{1}p_{2}+p_{3}, \quad s_{4}=p_{1}^{4}-3p_{1}^{2}p_{2}+p_{2}^{2}+2p_{1}p_{3}-p_{4}.
\]
Hence we obtain (\ref{Bp1}). On the other hand, put 
\[
q_{r}:=\tilde{a}_{\pi}(p^{r})=\sum _{j=1}^{4}\alpha _{j}(p)^{r}.
\]
Then 
\[
s_{1}=q_{1}, \quad s_{2}=\frac{1}{2}(q_{1}^{2}-q_{2}),
\]
\[
s_{3}=\frac{1}{3}q_{3}+\frac{1}{6}q_{1}^{3}-\frac{1}{2}q_{1}q_{2}, \quad 
s_{4}=\frac{1}{24}q_{1}^{4}-\frac{1}{4}q_{1}^{2}q_{2}+\frac{1}{8}q_{2}^{2}+\frac{1}{3}q_{1}q_{3}-\frac{1}{4}q_{4}.
\]
Hence we obtain (\ref{Bp2}).


\section{Estimation of ${\cal S}_{\pi}(\Psi ,Q)$}
The term ${\cal S}_{\pi}(\Psi ,Q)$ is defined by 
\[
{\cal S}_{\pi}(\Psi ,Q)=\frac{1}{2}\underset{m\neq n}{\sum _{m,n=1}^{\infty}}\frac{a_{\pi}(m)\overline{a_{\pi}(n)}}{\sqrt{mn}} \underset{(dr, mn N_{\pi})=1}{ \underset{d>D, r|(m\pm n)}{\sum _{d,r}}} \mu (d)\varphi (r)\Psi (dr/Q)V\left( m,n; \frac{drN_{\pi}^{1/4}}{(\log Q)^{\alpha}} \right).
\]
We reintroduce the terms  $m=n$. This process gives an error term
\begin{align*}
& \sum _{n=1}^{\infty}\frac{|a_{\pi}(n)|^{2}}{n}\underset{d>D}{\underset{(dr,nN_{\pi})=1}{\sum _{d,r}}   }\mu (d)\varphi (r)\Psi (dr/Q)V\left( n,n; \frac{drN_{\pi}^{1/4}}{(\log Q)^{\alpha}} \right) \\
& \quad  =\underset{d>D}{\underset{(dr,N_{\pi})=1}{\sum _{d,r}}   }\mu (d)\varphi (r)\Psi (dr/Q) \\
& \quad \quad \times \frac{1}{2\pi i}\int _{-\infty}^{\infty}\int _{(1)}\underset{(n,dr)=1}{\sum _{n=1}^{\infty}}\frac{|a_{\pi}(n)|^{2}}{n^{1+2s}}G(1/2+s,t)\left( \frac{drN_{\pi}^{1/4}}{\pi (\log Q)^{\alpha}} \right)^{4s}\frac{ds}{s}dt.
\end{align*}
By the computation of ${\cal D}_{\pi}(\Psi ,Q)$, we see that
\[
\frac{1}{2\pi i} \int _{(1)}\underset{(n,dr)=1}{\sum _{n=1}^{\infty}}\frac{|a_{\pi}(n)|^{2}}{n^{1+2s}}G(1/2+s,t)\left( \frac{drN_{\pi}^{1/4}}{\pi (\log Q)^{\alpha}} \right)^{4s}\frac{ds}{s} \ll \log Q\log N_{\pi}
\]
for $dr \asymp Q$. Hence the error term is at most
\begin{equation}
\label{39}
\log Q\log N_{\pi}\underset{d>D}{\sum _{d,r}}\varphi (r)\Psi (dr/Q) \ll \log Q\sum _{d>D}\sum _{r \leq 2Q/d}r \ll \frac{Q^{2}\log Q\log N_{\pi}}{D}.
\end{equation}
For simplicity we skip to write the error term bounded by (\ref{39}).  Now we have 
\[
{\cal S}_{\pi}(\Psi ,Q)=\frac{1}{2}\underset{ }{\sum _{m,n=1}^{\infty}}\frac{a_{\pi}(m)\overline{a_{\pi}(n)}}{\sqrt{mn}} \underset{(dr, mn N_{\pi})=1}{ \underset{d>D, r|(m\pm n)}{\sum _{d,r}}} \mu (d)\varphi (r)\Psi (dr/Q)V\left( m,n; \frac{drN_{\pi}^{1/4}}{(\log Q)^{\alpha}} \right).
\]
We replace the condition $r|(m\pm n)$ with the sum over even Dirichlet characters modulo $r$. Then 
\[
{\cal S}_{\pi}(\Psi ,Q)=\underset{d>D}{\underset{(dr,N_{\pi})=1}{\sum _{d,r}}  }\mu (d)\Psi (dr/Q)\underset{\chi (-1)=1}{\sum _{\chi (\mathrm{mod}\; r)}  }\underset{(mn,d)=1}{\sum _{m,n=1}^{\infty}}  \frac{a_{\pi}(m)\overline{a_{\pi}(n)}\chi (m)\overline{\chi}(n)}{\sqrt{mn}}V\left( m,n; \frac{drN_{\pi}^{1/4}}{(\log Q)^{\alpha}} \right).
\]
By the definition of $V(\xi ,\eta ;\mu)$, we have 
\begin{align*}
{\cal S}_{\pi}(\Psi ,Q)=& \underset{d>D}{\underset{(dr,N_{\pi})=1}{\sum _{d,r}}} \mu (d)\Psi (dr/Q) \frac{1}{2\pi i}\int _{-\infty}^{\infty}\int _{(1)}\underset{\chi (-1)=1}{\sum _{\chi (\mathrm{mod}\; r)}  }\underset{(mn,d)=1}{\sum _{m,n=1}^{\infty}} \frac{a_{\pi}(m)\overline{a_{\pi}(n)}\chi (m)\overline{\chi}(n)}{m^{1/2+s+it}n^{1/2+s-it}} \\
& \quad \quad \times G(1/2+s, t)\left( \frac{\pi ^{4}(\log Q)^{4\alpha}}{d^{4}r^{4}N_{\pi}} \right)^{-s}\frac{ds}{s}dt.
\end{align*}
Writing the sum over $m$, $n$ as a product of  $L$-functions, we have 
\begin{equation}
\label{42}
\begin{aligned}
{\cal S}_{\pi}(\Psi ,Q)=& \underset{d>D}{\underset{(dr,N_{\pi})=1}{\sum _{d,r}}} \mu (d)\Psi (dr/Q) \frac{1}{2\pi i}\int _{-\infty}^{\infty}\int _{(1)}\underset{\chi (-1)=1}{\sum _{\chi (\mathrm{mod}\; r)}  } \frac{L(1/2+s+it, \pi \otimes \chi)L(1/2+s-it, \tilde{\pi}\otimes \overline{\chi})}{L_{d}(1/2+s+it, \pi \otimes \chi)L_{d}(1/2+s-it, \tilde{\pi}\otimes \overline{\chi})   }     \\
& \quad \quad \times G(1/2+s, t)\left( \frac{\pi ^{4}(\log Q)^{4\alpha}}{d^{4}r^{4}N_{\pi}} \right)^{-s}\frac{ds}{s}dt,
\end{aligned}
\end{equation}
where 
\[
L_{d}(s, \pi \otimes \chi)=\prod _{p|d}\prod _{j=1}^{4}\left(1-\frac{\alpha _{j}(p)\chi (p)}{p^{s}} \right)^{-1}.
\]
For $\Re (s)>1/2$, 
\[
L_{d}(s, \pi \otimes \chi )^{-1}\ll \prod _{p|d}\prod _{j=1}^{4}\left( 1+\frac{|\alpha _{j}(p)|}{p^{1/2}} \right) \ll 2^{4\omega (d)}\ll d^{\varepsilon}.
\]
We shift the line of integration to $\Re (s)=1/\log Q$. Put $s=1/\log Q +iv$, $t_{1}=v+t$, $t_{2}=v-t$. Furthermore, we decompose the sum over even characters by $\sum _{\chi (\textrm{mod}\; r)}=\sum _{l|r}\sum _{\chi ^{'}(\textrm{mod}\; l)}^{\flat}$. If a Dirichlet character $\chi$ ($\mathrm{mod}\; r$) is induced by a primitive character $\chi^{'}$ ($\mathrm{mod}\; l$), then 
\begin{align*}
L(1/2 +1/\log Q +it, \pi \otimes \chi) & =L(1/2 +1/\log Q +it, \pi \otimes \chi ^{'})\underset{p\nmid l}{\prod _{p|r}}\prod _{j=1}^{4}\left( 1-\frac{\alpha _{j}(p)\chi ^{'}(p)}{p^{1/2+1/\log Q +it}} \right) \\
&  \ll |L(1/2 +1/\log Q +it, \pi \otimes \chi ^{'})|\prod _{p|\frac{r}{l}}2^{4} \\
&  \ll  |L(1/2 +1/\log Q +it, \pi \otimes \chi ^{'})|\prod _{p|\frac{r}{l}}\tau (r/l)^{4}.
\end{align*}
Furthermore, we have
\[
G(1/2+s, t)\ll \exp (-c(|t_{1}|+|t_{2}|))
\]
for some constant $c>0$. Combining these estimates with $|ab|\leq a^{2}+b^{2}$, we have
\begin{equation}
\label{43}
\begin{aligned}
& {\cal S}_{\pi}(\Psi ,Q) \\
& \quad \ll \log Q \sum _{d>D}d^{\varepsilon}\int _{-\infty}^{\infty}\int _{-\infty}^{\infty}\exp (-c(|t_{1}|+|t_{2}|))  \sum _{l\leq \frac{2Q}{d}}\underset{l|r}{\sum _{r}}\tau (r/l)^{8} \\
& \quad \quad \times\sum _{\chi ^{'}(\mathrm{mod}\; l)}^{\quad \quad \flat}\left\{ |L(1/2+1/\log Q+it_{1}, \pi \otimes \chi)|^{2}+ |L(1/2+1/\log Q+it_{2}, \tilde{\pi} \otimes \overline{\chi})|^{2} \right\} dt_{1}dt_{2}.
\end{aligned}
\end{equation}
Our assumption (\ref{assumption}) in Theorem 1.1 yields 
\begin{align*}
& \int _{-\infty}^{\infty}\int _{-\infty}^{\infty}\exp (-c(|t_{1}|+|t_{2}|))  \\
& \quad \times \sum _{\chi ^{'}(\mathrm{mod}\; l)}^{\quad \quad \flat}\left\{ |L(1/2+1/\log Q+it_{1}, \pi \otimes \chi)|^{2}+ |L(1/2+1/\log Q+it_{2}, \tilde{\pi} \otimes \overline{\chi})|^{2} \right\} dt_{1}dt_{2} \\
& \quad \ll N_{\pi}^{\varepsilon}\varphi (l)(\log l)^{K}.
\end{align*}
Hence by (\ref{43}), we have 
\begin{align*}
 {\cal S}_{\pi}(\Psi ,Q) &\ll N_{\pi}^{\varepsilon }\log Q\sum _{d>D}d^{\varepsilon}\sum _{l \leq \frac{2Q}{d}} l(\log l)^{K}       \underset{l|r}{\sum _{r}}\tau (r/l)^{8}\\ 
 &\ll N_{\pi}^{\varepsilon }\log Q\sum _{d>D}d^{\varepsilon}\sum _{l \leq \frac{2Q}{d}} l(\log l)^{K}       \frac{2Q}{dl}(\log Q)^{A^{'}} \\
 &\ll N_{\pi}^{\varepsilon }Q(\log Q)^{A^{'}+1}\sum _{d>D}\frac{1}{d^{1-\varepsilon}}\sum _{l \leq \frac{2Q}{d}}(\log l)^{K} \\
 &\ll \frac{N_{\pi}^{\varepsilon}Q^{2}(\log Q)^{A^{'}+K+1}}{D^{1-\varepsilon}},
\end{align*}
where $A^{'}>0$ is some absolute constant.  The right hand side is larger than that of (\ref{39}). Thus we have the following conclusion. 
\begin{prop}
We have
\begin{equation}
\label{S}
{\cal S}_{\pi}(\Psi ,Q)\ll  \frac{N_{\pi}^{\varepsilon}Q^{2}(\log Q)^{A}}{D^{1-\varepsilon}},
\end{equation}
where  $A$ is some positive constant dependent only on $K$ in  (\ref{assumption}) and the implied constant is dependent only on $\varepsilon >0$. 
\end{prop}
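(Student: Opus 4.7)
The plan is to manipulate $\mathcal{S}_\pi(\Psi,Q)$ into a form where the hybrid second-moment hypothesis (\ref{assumption}) can be applied to $L(s, \pi \otimes \chi)$, with the saving coming entirely from the restriction $d > D$. The awkward feature of the sum as written is the coupling of $m$ and $n$ by the divisibility condition $r \mid (m \pm n)$ together with the restriction $m \neq n$. My first step is to reinstate the diagonal terms $m = n$: this creates an auxiliary term of the same shape as ${\cal D}_\pi(\Psi, Q)$ but with $d$ now restricted to $d > D$. The inner Mellin contour argument used in Section 3 gives that, for $dr \asymp Q$, the $s$-integral against $G(1/2+s,t)$ of the Ramanujan $L$-series is $O(\log Q \log N_\pi)$, so this reintroduced piece contributes $\ll Q^2 (\log Q)(\log N_\pi)/D$, which is absorbed into the target bound.

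Next, I would open the condition $r \mid (m \pm n)$ into a sum over even Dirichlet characters modulo $r$ via orthogonality, producing $\chi(m)\overline{\chi}(n)$. Substituting the contour representation of $V$ in terms of $G(1/2+s,t)$, the double $(m,n)$-sum factorizes as
\[
\frac{L(1/2+s+it, \pi \otimes \chi)\,L(1/2+s-it, \tilde\pi \otimes \overline\chi)}{L_d(1/2+s+it, \pi \otimes \chi)\,L_d(1/2+s-it, \tilde\pi \otimes \overline\chi)},
\]
where the local Euler factors at $p \mid d$ contribute $\ll d^\varepsilon$ by Ramanujan. I would then shift the $s$-contour to $\Re(s) = 1/\log Q$ (no poles of $G$ are crossed in the allowed region), split the character sum as $\sum_{\chi \,(r)} = \sum_{l \mid r}\sum_{\chi' \,(l)}^\flat$ over the primitive character $\chi'$ inducing $\chi$, and absorb the extra Euler factors at $p \mid r/l$ into $\tau(r/l)^4$.

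At this point the decisive analytic input is (\ref{assumption}): applied to the $\chi'$-sum and integrated against the Gaussian decay of $G$ in the variables $t_1 = v+t$ and $t_2 = v-t$, together with $|ab| \leq a^2 + b^2$ to split the product of $L$-values, it yields $\ll N_\pi^\varepsilon \varphi(l)(\log l)^K$ for each primitive level $l$. What remains is purely arithmetic bookkeeping: the sum over multiples $r$ of $l$ with $dr \asymp Q$ contributes $\ll (Q/dl)(\log Q)^{A'}$ via standard divisor estimates, the resulting $l$-sum up to $2Q/d$ is $O(Q/d)$ times a further power of $\log Q$, and finally $\sum_{d > D} d^{\varepsilon - 1} \ll D^{\varepsilon - 1}$. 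Multiplying these yields precisely $N_\pi^\varepsilon Q^2 (\log Q)^A / D^{1-\varepsilon}$.

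The main obstacle is genuinely encapsulated by the hypothesis (\ref{assumption}): for a $\mathrm{GL}_4$ representation, a hybrid second-moment bound for $L(1/2+s, \pi \otimes \chi)$ averaged over characters of modulus $q$ and over short segments of the critical strip is well beyond the reach of convexity and in fact is the key open input to the whole theorem; everything else in the argument is rearrangement, orthogonality, contour shifts that avoid the poles of $G$, and divisor-sum estimates, all of which are routine once the moment bound is granted.
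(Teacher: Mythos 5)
Your argument tracks the paper's proof of this proposition step for step: reinstating the diagonal $m=n$, opening $r \mid (m \pm n)$ via orthogonality of even characters modulo $r$, factoring through $L(1/2+s \pm it, \pi \otimes \chi)$ with the local factor $L_d^{-1} \ll d^\varepsilon$, shifting to $\Re(s)=1/\log Q$, passing to primitive characters $\chi'$ of conductor $l\mid r$ and absorbing the imprimitive Euler factors into $\tau(r/l)^{4}$, invoking hypothesis (\ref{assumption}) against the Gaussian decay of $G$ with $|ab|\le a^2+b^2$, and finishing with divisor-sum bookkeeping. The one slip is in the last sentence of bookkeeping: the closing $d$-sum is $\sum_{d>D} d^{\varepsilon-2}\ll D^{\varepsilon-1}$ (the extra $d^{-1}$ comes from the $l$-sum ranging over $l\le 2Q/d$), whereas $\sum_{d>D} d^{\varepsilon-1}$ as you wrote diverges for small $\varepsilon>0$; the final bound you state is nonetheless the correct one.
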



\section{Estimation of ${\cal G}_{\pi}(\Psi ,Q)$}
In this section, we estimate 
\begin{equation}
\label{45}
{\cal G}_{\pi}(\Psi ,Q)=\frac{1}{2}\underset{m\neq n}{\sum _{m,n=1}^{\infty}}\frac{a_{\pi}(m)\overline{a_{\pi}(n)}}{\sqrt{mn}} \underset{(dr, mn N_{\pi})=1}{ \underset{d\leq D, r|(m\pm n)}{\sum _{d,r}}} \mu (d)\varphi (r)\Psi (dr/Q)V\left( m,n; \frac{drN_{\pi}^{1/4}}{(\log Q)^{\alpha}} \right).
\end{equation}
Write $g=(m,n)$, $m=gM$, $n=gN$ with $(M,N)=1$. Moreover, we replace $\varphi (r)$ with $\sum _{al=r}\mu (a)l$. Since $r|m\pm n$, $(r,mn)=1$ is equivalent to $(r,g)=1$. Hence the sum over $d, r$ in (\ref{45}) becomes
\begin{equation}
\label{46}
\underset{d \leq D,\; al|M\pm N}{\underset{(d, mnN_{\pi})=1,\; (al, g)=1}{\sum _{d,a,l}}  }\mu (d)\mu (a)l \Psi( dal/Q)V\left(m,n;\frac{dalN_{\pi}^{1/4}}{(\log Q)^{\alpha}}  \right).
\end{equation}
Write $|M\pm N|=alh$. Then $l$ is replaced with $|M\pm N|/ah$. The condition $(l,g)=1$ is removed by multiplying $\sum _{b|(l,g)}\mu (b)$. Write $l=bk$. Then the sum  (\ref{46}) becomes 
\[
\underset{(d, gN_{\pi}MN)=1}{\sum _{d \leq D}}\mu (d)\underset{(a,g)=1}{\sum _{a}}\mu (a)\underset{b|g}{\sum _{b}}\mu (b)\underset{|M\pm N|=abkh}{\sum _{k\geq 1}}bk \Psi (dabk/Q)V\left( gM,gN; \frac{dabkN_{\pi}^{1/4}}{(\log Q)^{\alpha}} \right).
\]
Substituting $k=|M\pm N|/abh$ and writing the sum above as the sum over $d,a,b,h$, then  (\ref{46}) equals
\begin{equation}
\label{47}
\begin{aligned}
& Q\underset{(d,gN_{\pi}MN)=1}{\sum _{d\leq D}}\underset{(a,g)=1}{\sum _{a}}\underset{b|g}{\sum _{b}}\underset{M\equiv \mp N (\mathrm{mod}\; abh)}{\sum _{h>0}}\frac{\mu (d)\mu (a)\mu (b)}{ad}\frac{d|M\pm N|}{Qh} \\
& \quad \quad \quad \quad \quad  \times \Psi \left(\frac{d|M\pm N|}{Qh}  \right)V\left( gM,gN; \frac{d|M\pm N|N_{\pi}^{1/4}}{h(\log Q)^{\alpha}} \right).
\end{aligned}
\end{equation} 
For $u, x, y \in \mathbb{R}_{\geq 0}$, put 
\[
{\cal W}^{\pm}(x,y;u)=u|x \pm y|\Psi (u|x\pm y|)V(x,y; u|x\pm y|).
\]
Since $V(\xi ,\eta ;\mu)$ satisfies 
\[
V(cm, cn ;\sqrt{c}\mu)=V(m,n; \mu) 
\]
for any $c>0$, by taking $c=Q^{2}N_{\pi}^{1/2}/(\log Q)^{2\alpha}$, we have 
\[
V\left( gM,gN; \frac{d|M\pm N|N_{\pi}^{1/4}}{h(\log Q)^{\alpha}} \right)=V\left( \frac{gM(\log Q)^{2\alpha}}{Q^{2}N_{\pi}^{1/2}}, \frac{gN(\log Q)^{2\alpha}}{Q^{2}N_{\pi}^{1/2}}; \frac{d|M\pm N|}{Qh} \right).
\]
Hence
\begin{align*}
& \frac{d|M\pm N|}{Qh}\Psi \left(\frac{d|M\pm N|}{Qh}  \right)V\left( gM,gN; \frac{d|M\pm N|N_{\pi}^{1/4}}{h(\log Q)^{\alpha}} \right) \\
& \quad ={\cal W}^{\pm}\left(\frac{gM(\log Q)^{2\alpha}}{Q^{2}N_{\pi}^{1/2}},\frac{gN(\log Q)^{2\alpha}}{Q^{2}N_{\pi}^{1/2}}; \frac{QN_{\pi}^{1/2}d}{gh(\log Q)^{2\alpha}}   \right).
\end{align*}
Therefore, (\ref{47}) is rewritten as 
\begin{equation}
\label{48}
\begin{aligned}
& Q\underset{(d,gN_{\pi}MN)=1}{\sum _{d\leq D}}\underset{(a,g)=1}{\sum _{a}}\underset{b|g}{\sum _{b}}\underset{M\equiv \mp N (\mathrm{mod}\; abh)}{\sum _{h>0}}\frac{\mu (d)\mu (a)\mu (b)}{ad} \\
& \quad \quad \quad \quad \quad  \times {\cal W}^{\pm}\left(\frac{gM(\log Q)^{2\alpha}}{Q^{2}N_{\pi}^{1/2}},\frac{gN(\log Q)^{2\alpha}}{Q^{2}N_{\pi}^{1/2}}; \frac{QN_{\pi}^{1/2}d}{gh(\log Q)^{2\alpha}}   \right).
\end{aligned}
\end{equation}
Substituting this into (\ref{45}), we obtain 
\begin{equation}
\label{49}
\begin{aligned}
& {\cal G}_{\pi}(\Psi ,Q) \\
& \quad =\frac{Q}{2}\underset{m\neq n}{\sum _{m,n=1}^{\infty}}\frac{a_{\pi}(m)\overline{a_{\pi}(n)}}{\sqrt{mn}}\underset{(d,gN_{\pi}MN)=1}{\sum _{d\leq D}}  \underset{(a,g)=1}{\sum _{a}}\underset{b|g}{\sum _{b}}  \underset{M\equiv \mp N (\mathrm{mod}\; abh)}{\sum _{h>0}}\frac{\mu (d)\mu (a)\mu (b)}{ad} \\
& \quad \quad \quad \quad \quad  \times {\cal W}^{\pm}\left(\frac{gM(\log Q)^{2\alpha}}{Q^{2}N_{\pi}^{1/2}},\frac{gN(\log Q)^{2\alpha}}{Q^{2}N_{\pi}^{1/2}}; \frac{QN_{\pi}^{1/2}d}{gh(\log Q)^{2\alpha}}   \right).
\end{aligned}
\end{equation}
Suppose $a>2Q$. Since $M\neq N$, $M\equiv \mp N (\textrm{mod}\; abh)$, it follows that $|M\pm N|\geq abh$. Hence
\[
\frac{QN_{\pi}^{1/2}d}{gh(\log Q)^{2\alpha}} \left|\frac{gM(\log Q)^{2\alpha}}{Q^{2}N_{\pi}^{1/2}}\pm \frac{gN(\log Q)^{2\alpha}}{Q^{2}N_{\pi}^{1/2}}  \right|=\frac{d|M\pm N|}{Qh}\geq \frac{dab}{Q}>2.
\]
Since $\Psi (x)$ is supported in $[1,2]$, in this case we have
\[
{\cal W}^{\pm}\left(\frac{gM(\log Q)^{2\alpha}}{Q^{2}N_{\pi}^{1/2}},\frac{gN(\log Q)^{2\alpha}}{Q^{2}N_{\pi}^{1/2}}; \frac{QN_{\pi}^{1/2}d}{gh(\log Q)^{2\alpha}}   \right)=0.
\]
Therefore, we may restrict the sum to $a\leq 2Q$. Thus we have
\begin{equation}
\label{50}
\begin{aligned}
& {\cal G}_{\pi}(\Psi ,Q) \\
& \quad =\frac{Q}{2}\underset{m\neq n}{\sum _{m,n=1}^{\infty}}\frac{a_{\pi}(m)\overline{a_{\pi}(n)}}{\sqrt{mn}}\underset{(d,gN_{\pi}MN)=1}{\sum _{d\leq D}}\underset{(a,g)=1}{\sum _{a\leq 2Q}}\underset{b|g}{\sum _{b}}\underset{M\equiv \mp N (\mathrm{mod}\; abh)}{\sum _{h>0}}\frac{\mu (d)\mu (a)\mu (b)}{ad} \\
& \quad \quad \quad \quad \quad  \times {\cal W}^{\pm}\left(\frac{gM(\log Q)^{2\alpha}}{Q^{2}N_{\pi}^{1/2}},\frac{gN(\log Q)^{2\alpha}}{Q^{2}N_{\pi}^{1/2}}; \frac{QN_{\pi}^{1/2}d}{gh(\log Q)^{2\alpha}}   \right).
\end{aligned}
\end{equation}
To estimate (\ref{50}), we apply the following lemma.
\begin{lem}[\cite{CL}, Lemma 6.2]
For $s_{1},s_{2}\in \mathbb{C}$, $u>0$, put
\[
\tilde{\cal W}^{\pm}(s_{1},s_{2};u)=\int _{0}^{\infty}\int _{0}^{\infty}{\cal W}^{\pm}(x,y;u)x^{s_{1}}y^{s_{2}}\frac{dx}{x}\frac{dy}{y}.
\]
Then  the functions $\tilde{\cal W}^{\pm}(s_{1},s_{2};u)$ are analytic in the domain $\Re (s_{1}), \Re (s_{2})>0$ and the inversion formula
\begin{equation}
\label{51}
{\cal W}^{\pm}(x,y;u)=\frac{1}{(2\pi i)^{2}}\int _{(c_{1})}\int _{(c_{2})}\tilde{\cal W}^{\pm}(s_{1},s_{2};u)x^{-s_{1}}y^{-s_{2}}ds_{1}ds_{2}
\end{equation}
holds for any $c_{1},c_{2}>0$. Moreover, for any positive integer $k$, there exists a constant $c=c_{k}>0$ such that
\begin{equation}
\label{52}
|{\cal W}^{\pm}(s_{1},s_{2};u)|\ll \frac{(1+u)^{k-1}}{\max \{|s_{1}|,|s_{2}| \}^{k}}\exp (-cu^{-1/4})
\end{equation}
holds. 
\end{lem}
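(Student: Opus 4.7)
The plan is to first unfold the definitions of $V$ and $W$, obtaining an explicit integral representation for $\mathcal{W}^{\pm}(x, y; u)$, then perform the double Mellin transform by a change of variables $w = |x \pm y|$, $\alpha = x/w$ (suitably modified in the $-$ case after splitting into $x > y$ and $x < y$). Substituting (\ref{15}) and (\ref{16}) into the definition of $\mathcal{W}^{\pm}$ gives
\begin{equation*}
\mathcal{W}^{\pm}(x, y; u) = \frac{1}{2\pi i}\int_{-\infty}^{\infty}\int_{(\sigma)} \left(\frac{y}{x}\right)^{it} G(1/2 + s, t)\,\pi^{-4s}(xy)^{-s}(u|x \pm y|)^{4s+1}\Psi(u|x \pm y|)\,\frac{ds\,dt}{s}
\end{equation*}
for any $\sigma > 0$. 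In the $+$ case the substitution $w = x + y$, $\alpha = x/w$ factors the $(x,y)$-integration as a Beta integral in $\alpha$ times a Mellin transform of $\Psi$ in $w$, yielding
\begin{equation*}
\tilde{\mathcal{W}}^{+}(s_1, s_2; u) = \frac{u^{-s_1 - s_2}}{2\pi i}\int_{-\infty}^{\infty}\int_{(\sigma)} G(1/2 + s, t)\,\pi^{-4s}\,u^{2s}\,\tilde{\Psi}(s_1 + s_2 + 2s + 1)\,B(s_1 - s - it,\, s_2 - s + it)\,\frac{ds\,dt}{s},
\end{equation*}
valid for $0 < \sigma < \min\{\Re(s_1), \Re(s_2)\}$, with an analogous Mellin--Barnes expression in the $-$ case.

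From this representation, analyticity on $\{\Re(s_1), \Re(s_2) > 0\}$ is immediate once $\sigma$ is chosen small enough: the rapid decay of $\tilde{\Psi}$ on vertical lines, together with the exponential decay in $t$ of the gamma factors in $G$ and Stirling bounds on $B$, allow one to pass derivatives in $s_1, s_2$ under the integral sign. The inversion formula (\ref{51}) then follows from the standard double Mellin inversion applied to the smooth, suitably decaying function $\mathcal{W}^{\pm}(\cdot, \cdot; u)$.

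The bound (\ref{52}) combines two ingredients. To obtain the factor $\max\{|s_1|, |s_2|\}^{-k}$, I integrate by parts $k$ times in whichever of $x$ or $y$ is paired with the larger Mellin variable; the required pointwise bounds on $\partial_x^j \partial_y^l \mathcal{W}^{\pm}$ come directly from the integral representation above. To extract $\exp(-c u^{-1/4})$, I shift the $s$-contour far to the right in the Mellin--Barnes representation: Stirling gives $|G(1/2 + \sigma, t)| \asymp \Gamma(\sigma/2)^8$ for large real $\sigma$, and balancing this against the $u^{2\sigma}$ factor by a saddle-point optimization in $\sigma$ produces super-polynomial decay in $1/u$ of the claimed strength. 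The factor $(1 + u)^{k-1}$ then absorbs the moderate and large $u$ regimes, where the saddle-point contribution is vacuous.

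The main obstacle is this uniform saddle-point/contour-shift argument producing the $\exp(-c u^{-1/4})$ decay: one must move the $s$-line across the infinitely many gamma poles of $B(s_1 - s - it, s_2 - s + it)$ while tracking the competition between the growth $\Gamma(\sigma/2)^8$ and the decay $u^{2\sigma}$ uniformly in $t$ and in $(s_1, s_2)$. Once this estimate is in place, the remaining steps are routine bookkeeping with Stirling's formula and the rapid decay of $\tilde{\Psi}$.
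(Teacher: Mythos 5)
The paper does not prove this lemma: it is imported verbatim from Chandee--Li \cite{CL}, Lemma 6.2, and no argument for it appears in the present text, so there is no ``paper's proof'' to compare yours against. (Note also that the left-hand side of (\ref{52}) is a typo in the paper --- it should read $\tilde{\mathcal{W}}^{\pm}(s_{1},s_{2};u)$.) Judged on its own terms, your unfolding of the definitions is sound: the integral representation for $\mathcal{W}^{\pm}(x,y;u)$, the change of variables $w=x+y$, $\alpha=x/w$, and the resulting Mellin--Barnes formula with the factors $\tilde{\Psi}(s_{1}+s_{2}+2s+1)$ and $B(s_{1}-s-it,s_{2}-s+it)$ all check out, and the analyticity and inversion statements are indeed standard consequences of that representation together with the compact support of $\Psi$. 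The integration-by-parts device for the factor $\max\{|s_{1}|,|s_{2}|\}^{-k}$ is also fine in outline.

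The derivation of the decay factor $\exp(-cu^{-1/4})$ in (\ref{52}), however, is not actually carried out, and this is the entire content of the bound. You propose to shift the $s$-contour far to the right in the Mellin--Barnes formula and then do a saddle-point optimization, but you yourself label the required pole-crossing ``the main obstacle'' and leave it unresolved. The obstacle is genuine: the unobstructed shift is only to $\Re(s)=\sigma<\min\{\Re(s_{1}),\Re(s_{2})\}$, and in the application one has $\Re(s_{1})=\Re(s_{2})=100/\log Q$, so the harmless shift yields $u^{2\sigma}\asymp 1$ and no decay whatsoever; everything hinges on controlling the doubly-infinite array of residues of $\Gamma(s_{1}-s-it)$ and $\Gamma(s_{2}-s+it)$ uniformly in $t$, $s_{1}$, $s_{2}$, which is not ``routine bookkeeping''. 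A secondary sign that the computation was not done: minimizing $\Gamma(\sigma/2)^{8}u^{2\sigma}$ gives $\sigma\asymp u^{-1/2}$ and a decay $\exp(-cu^{-1/2})$, which does not match the exponent $1/4$ asserted in the lemma. A cleaner route, which sidesteps the Beta function entirely, is to bound $V(\xi,\eta;\mu)$ pointwise from its defining double integral (\ref{16}) by shifting the $t$-contour (turning the oscillatory factor $(\eta/\xi)^{it}$ into a power of $\max\{\xi/\eta,\eta/\xi\}$) and the $s$-contour in tandem, keeping the Gamma factors of $G(1/2+s,t)$ pole-free while $(\pi^{4}\xi\eta/\mu^{4})^{-s}$ supplies the decay; one then feeds the resulting uniform bound on $\mathcal{W}^{\pm}(x,y;u)$ into the Mellin transform together with integration by parts. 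As written, your proposal stops short of either this argument or the residue summation you gesture at, so (\ref{52}) is not established.
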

We use (\ref{51}) to replace ${\cal W}^{\pm}$ with the integral of $\tilde{\cal W}^{\pm}$. We also replace the condition $M \equiv \mp N (\textrm{mod}\; abh)$ with the sum over Dirichlet  characters modulo $abh$. Then we have
\begin{equation}
\label{53}
\begin{aligned}
& {\cal G}_{\pi}(\Psi ,Q) \\
& \quad =\frac{Q}{2}\sum _{a\leq 2Q}\sum _{b>0}\sum _{h>0}\sum _{\chi (\mathrm{mod}\; abh)}\underset{b|g, (a,g)=1}{\sum _{g}}\underset{(d,gN_{\pi})=1}{\sum _{d\leq D}}\frac{\mu (d)\mu (a)\mu (b)}{adg \varphi (abh)} \\
& \quad \quad \times \frac{1}{(2\pi i)^{2}}\int _{(\frac{1}{2}+\varepsilon)}\int _{(\frac{1}{2}+\varepsilon)}\tilde{\cal W}^{\pm}\left(s_{1},s_{2}; \frac{QN_{\pi}^{1/2}d}{gh(\log Q)^{2\alpha}}  \right) \left(  \frac{Q^{2}N_{\pi}^{1/2}}{g(\log Q)^{2\alpha}}\right)^{s_{1}+s_{2}} \\
& \quad \quad  \times \underset{(MN,d)=1}{ \underset{M\neq N, (M,N)=1}{\sum _{M,N=1}^{\infty}}    }\frac{a_{\pi}(gM)\overline{a_{\pi}(gN)}\chi (M)\overline{\chi}(\mp N)}{M^{1/2+s_{1}}N^{1/2+s_{2}}}ds_{1}ds_{2}.
\end{aligned}
\end{equation}
We estimate the series 
\[
{\cal L}^{\pm}(s_{1},s_{2};d,g;\chi):=\underset{(MN,d)=1}{ \underset{M\neq N, (M,N)=1}{\sum _{M,N=1}^{\infty}}    }\frac{a_{\pi}(gM)\overline{a_{\pi}(gN)}\chi (M)\overline{\chi}(\mp N)}{M^{1/2+s_{1}}N^{1/2+s_{2}}}.
\]
\begin{lem}
For $\Re (s_{1}), \Re (s_{2})>1/2$, we have 
\begin{equation}
\label{L}
\begin{aligned}
& {\cal L}^{\pm}(s_{1},s_{2};d,g;\chi) \\
& \quad =\overline{\chi}(\mp 1) \left( L(s_{1}+1/2, \pi \otimes \chi)L(s_{2}+1/2, \tilde{\pi}\otimes \overline{\chi})\lambda (s_{1},s_{2};d,g;\chi)\theta (s_{1},s_{2};d,g;\chi)-|a_{\pi}(g)|^{2} \right), \\
\end{aligned}
\end{equation}
where the functons $\lambda (s_{1},s_{2};d,g;\chi)$,  $\theta (s_{1},s_{2};d,g;\chi)$ are continued holomorphically to the domain $\Re (s_{1}), \Re (s_{2})>0$ and satisfy the bounds 
\[
\lambda (s_{1},s_{2};d,g;\chi) \ll \log Q,
\]
\[
\theta (s_{1},s_{2};d,g;\chi) \ll \tau (d)^{c_{1}}\tau (g)^{c_{2}}
\]
uniformly on the lines $\Re (s_{i})=1/\log Q$ $(i=1,2)$, where $c_{1}, c_{2}$ are some absolute constants. 
\end{lem}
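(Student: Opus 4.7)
The plan is to express ${\cal L}^\pm$ as an Euler product after removing the $M \ne N$ restriction, factor out the two expected $L$-functions prime-by-prime, and identify the residual factor with an analytic expression involving the Rankin--Selberg $L$-function $L(s, \pi \times \tilde{\pi})$.

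First I would use $\overline{\chi}(\mp N) = \overline{\chi}(\mp 1) \overline{\chi}(N)$, valid since $\mp 1$ is coprime to the modulus $abh$ of $\chi$, to pull $\overline{\chi}(\mp 1)$ out of the sum. The only pair with $M = N$ and $(M, N) = 1$ is $(1,1)$, contributing $a_\pi(g)\overline{a_\pi(g)} = |a_\pi(g)|^2$, so adding this diagonal back yields
\[
{\cal L}^\pm(s_1, s_2; d, g; \chi) = \overline{\chi}(\mp 1) \bigl( T(s_1, s_2) - |a_\pi(g)|^2 \bigr),
\]
where $T$ is the sum over all pairs $(M, N)$ with $(M, N) = 1$ and $(MN, d) = 1$. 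Since the summand and both coprimality conditions are multiplicative across primes, $T = \prod_p T_p$. Setting $e = v_p(g)$: at $p \mid d$ only $\alpha = \beta = 0$ contributes and $T_p = |a_\pi(p^e)|^2 = 1$ (as $(d, g) = 1$ forces $e = 0$); at $p \nmid dg$ the condition $\min(\alpha, \beta) = 0$ gives $T_p = A_p + B_p - 1$, where $A_p, B_p$ are the Euler factors at $p$ of $L(1/2+s_1, \pi \otimes \chi)$ and $L(1/2+s_2, \tilde{\pi} \otimes \overline{\chi})$; and at $p \mid g, p \nmid d$ one has an explicit double series in $a_\pi(p^{e+\alpha})$ and $a_\pi(p^{e+\beta})$, absolutely convergent by temperedness.

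Next I would extract $\prod_p A_p B_p = L(1/2+s_1, \pi \otimes \chi) L(1/2+s_2, \tilde{\pi} \otimes \overline{\chi})$. At a good prime $p \nmid dg$,
\[
\frac{T_p}{A_p B_p} = \frac{A_p + B_p - 1}{A_p B_p} = 1 - (1 - A_p^{-1})(1 - B_p^{-1}),
\]
and expanding in $1/p$ gives $(1 - A_p^{-1})(1 - B_p^{-1}) = |a_\pi(p)|^2 |\chi(p)|^2 / p^{1+s_1+s_2} + O(p^{-3/2 - \Re s_1 - \Re s_2})$. Since this leading coefficient matches that of $L_p(1 + s_1 + s_2, \pi \times \tilde{\pi})^{-1}$, the Euler product
\[
H(s_1, s_2) := L(1 + s_1 + s_2, \pi \times \tilde{\pi}) \prod_{p \nmid dg} \frac{T_p}{A_p B_p} \prod_{p \mid dg} L_p(1 + s_1 + s_2, \pi \times \tilde{\pi})^{-1}
\]
is absolutely convergent in $\Re(s_1 + s_2) > -1/2$. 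I would then define
\[
\lambda := \frac{H(s_1, s_2)}{L(1 + s_1 + s_2, \pi \times \tilde{\pi})}, \qquad \theta := \prod_{p \mid dg} \frac{T_p \, L_p(1 + s_1 + s_2, \pi \times \tilde{\pi})}{A_p B_p},
\]
giving $T = L(1/2+s_1, \pi \otimes \chi) L(1/2+s_2, \tilde{\pi} \otimes \overline{\chi}) \cdot \lambda \cdot \theta$, as claimed.

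For the analytic continuation and bounds: $L(s, \pi \times \tilde{\pi})$ is holomorphic in $\Re s > 0$ except for a simple pole at $s = 1$, so $1/L(1 + s_1 + s_2, \pi \times \tilde{\pi})$ is holomorphic in $\Re s_i > 0$; combined with absolute convergence of $H$ there, $\lambda$ is holomorphic and satisfies $\lambda \ll 1 \ll \log Q$ on the line $\Re s_i = 1/\log Q$. For $\theta$, temperedness ($|a_\pi(p^k)| \leq \binom{k+3}{3}$ and $|\alpha_j(p)|, |\chi(p)| \leq 1$) bounds each factor at $p \mid dg$ by an absolute constant times a polynomial in $v_p(g)$. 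Using $\prod_{p \mid g}(v_p(g) + 1) = \tau(g)$ then yields $\theta \ll C^{\omega(dg)} \tau(g)^{c_3} \ll \tau(d)^{c_1} \tau(g)^{c_2}$.

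The main obstacle will be verifying cleanly that the bad-prime factor $T_p \cdot L_p(1 + s_1 + s_2, \pi \times \tilde{\pi}) / (A_p B_p)$ at $p \mid g$ has only polynomial growth in $v_p(g) = e$, since $T_p$ involves the convolution $\sum_\alpha a_\pi(p^{e+\alpha}) \overline{a_\pi(p^e)} \chi(p^\alpha)/p^{\alpha(1/2 + s_1)}$ whose magnitude a priori depends on $e$. This requires a careful Satake-parameter expansion so that the dependence on $e$ is captured by divisor-type quantities; the good-prime convergence and the analytic continuation of $\lambda$ via the Rankin--Selberg $L$-function are routine by comparison.
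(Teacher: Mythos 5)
Your decomposition is the same as the paper's up to and including the factoring out of $L(s_1+1/2,\pi\otimes\chi)L(s_2+1/2,\tilde\pi\otimes\overline\chi)$: you remove the diagonal $(M,N)=(1,1)$ pair, pull out $\overline\chi(\mp 1)$, and write the remaining coprime double sum as an Euler product; your identity $T_p/(A_pB_p)=1-(1-A_p^{-1})(1-B_p^{-1})$ is also what the paper exploits, in the form $\mathcal{M}_p=A_p^{-1}+B_p^{-1}-A_p^{-1}B_p^{-1}$. You diverge at the point where the paper simply sets $\lambda=\prod_{p\nmid dg}\mathcal{M}_p$, $\theta=\prod_{p\mid dg}\mathcal{M}_p$ and bounds $\lambda$ directly from $\mathcal{M}_p=1+O(p^{-1-2\Re s_1}+p^{-1-2\Re s_2}+p^{-1-\Re(s_1+s_2)})$, giving $\prod_{p}|1+O(p^{-1-c/\log Q})|\ll(\log Q)^{O(1)}$ with nothing more than the convergence of $\zeta(1+c/\log Q)$. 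Your idea of instead renormalizing by the Rankin--Selberg $L$-function $L(1+s_1+s_2,\pi\times\tilde\pi)$ is a genuinely different (and structurally suggestive) route, but it introduces difficulties that the paper's elementary estimate sidesteps, and two of your claims are not justified.

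First, the claim $\lambda\ll 1$ on the lines $\Re s_i=1/\log Q$ requires a uniform lower bound on $|L(1+s_1+s_2,\pi\times\tilde\pi)|$ over all $\Im(s_1+s_2)$. Non-vanishing on $\Re s=1$ only gives $|L(1+it,\pi\times\tilde\pi)|\gg 1/\log^{c}(\mathcal{Q}_{\pi\times\tilde\pi}(|t|+2))$ after invoking a zero-free region for the Rankin--Selberg $L$-function, a nontrivial input that the paper never needs; and even then the bound degrades polylogarithmically in $|t|$, so it is not $\ll 1$. Second, with your choice of $\lambda$ and $\theta$, a finite product $\prod_{p\mid dg}L_p(1+s_1+s_2,\pi\times\tilde\pi)^{\mp1}$ is moved from $\theta$ into $\lambda$; that factor is $\asymp C^{\omega(dg)}$, so $\lambda$ as you define it cannot satisfy $\lambda\ll\log Q$ uniformly in $d,g$ — it necessarily picks up a $\tau(dg)^{O(1)}$. (The lemma's intended split has the $d,g$-dependence isolated in $\theta$.) Third, the claimed absolute convergence region $\Re(s_1+s_2)>-1/2$ for $H$ is too generous: after cancelling the $|a_\pi(p)|^2/p^{1+s_1+s_2}$ terms, the next-order terms are $O(p^{-3/2-2\Re s_1-\Re s_2})+O(p^{-3/2-\Re s_1-2\Re s_2})$, and controlling both forces $\Re s_1,\Re s_2$ to be individually bounded below, not just their sum. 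None of this affects your local analysis at $p\mid dg$, which matches the paper's case analysis and temperedness bounds and is fine; but to establish the bound on $\lambda$ as stated, you should abandon the $H/L$ device and estimate $\prod_{p\nmid dg}\mathcal{M}_p$ directly via the elementary Euler-product bound, as the paper does.
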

\begin{proof}
Since $(M,N)=1$, the condition $M=N$ implies $M=N=1$. Hence 
\begin{equation}
\label{A-}
\begin{aligned}
{\cal L}^{\pm}(s_{1},s_{2};d,g;\chi)  =& \left ( \underset{(MN,d)=1}{ \underset{ (M,N)=1}{\sum _{M,N=1}^{\infty}}    }\frac{a_{\pi}(gM)\overline{a_{\pi}(gN)}\chi (M)\overline{\chi}( N)}{M^{1/2+s_{1}}N^{1/2+s_{2}}} -|a_{\pi}(g)|^{2} \right) \overline{\chi}(\mp 1) \\
=:& \left({\cal L}_{1}(s_{1},s_{2}; d,g ;\chi)-|a_{\pi}(g)|^{2} \right) \overline{\chi}(\mp 1),
\end{aligned}
\end{equation}
say. We write 
\begin{equation}
\label{B}
{\cal L}_{1}(s_{1},s_{2};d,g;\chi)=L(s_{1}+1/2, \pi \otimes \chi)L(s_{2}+1/2, \tilde{\pi}\otimes \overline{\chi}){\cal L}_{2}(s_{1},s_{2};d,g;\chi)
\end{equation}
and decompose ${\cal L}_{2}$ by 
\begin{equation}
\label{C}
{\cal L}_{2}(s_{1},s_{2};d,g;\chi)=\prod _{p}{\cal M}_{p}(s_{1},s_{2};d,g;\chi).
\end{equation}
Since $\pi _{p}$ is tempered for any $p$, the Satake parameters satisfy the Ramanujan bound $|\alpha _{j}(p)|\leq 1$ for $j=1, \ldots ,4$. Hence the Dirichlet coefficient
\[
a_{\pi}(p^{r})=\sum _{r_{1}+ \ldots +r_{4}=r}\alpha _{1}(p)^{r_{1}}\cdots \alpha _{4}(p)^{r_{4}}
\]
satisfies 
\[
|a_{\pi}(p^{r})|\leq \sum _{r_{1}+ \ldots +r_{4}=r}1 \leq (r+1)^{4}.
\]
We denote the $p$-factor of $L(s, \pi \otimes \chi)$ by $L_{p}(s, \pi \otimes \chi)$.  \\
{\bf Case I}. Suppose $p\nmid dg$. Then 
\begin{align*}
& {\cal M}_{p}(s_{1},s_{2};d,g;\chi) \\
& \quad =L_{p}(s_{1}+1/2, \pi \otimes \chi)^{-1}L_{p}(s_{2}+1/2, \tilde{\pi}\otimes \overline{\chi})^{-1} \left(1+\sum _{r=1}^{\infty}\frac{a_{\pi}(p^{r})\chi (p^{r})}{p^{(1/2+s_{1})r}}+\sum _{r=1}^{\infty}\frac{\overline{a_{\pi}(p^{r})}\overline{\chi }(p^{r})}{p^{(1/2+s_{2})r}}     \right) \\
& \quad =1+O\left(\frac{1}{p^{2\Re (s_{1})+1}} +\frac{1}{p^{2\Re (s_{2})+1}}+\frac{1}{p^{\Re (s_{1})+\Re (s_{2})+1}} \right).
\end{align*}
Put
\[
\lambda (s_{1},s_{2};d,g;\chi):=\prod _{p\nmid dg}{\cal M}_{p}(s_{1},s_{2};d,g;\chi).
\]
Then by the above computation, $\lambda (s_{1},s_{2};d,g;\chi)$ is holomorphic in the domain $\Re (s_{1}), \Re (s_{2})>0$ and on the set $\Re (s_{1})=\Re (s_{2})=100/\log Q$, we have
\begin{equation}
\label{d}
\lambda (s_{1},s_{2};d,g;\chi)\ll \log Q.
\end{equation}
{\bf Case II}. Suppose $p|d$. Then 
\begin{align*}
&  {\cal M}_{p}(s_{1},s_{2};d,g;\chi) \\
& \quad =L_{p}(s_{1}+1/2, \pi \otimes \chi)^{-1}L_{p}(s_{2}+1/2, \tilde{\pi}\otimes \overline{\chi})^{-1} \\
& \quad =\prod _{j=1}^{4}\left( 1-\frac{\alpha _{j}(p)\chi (p)}{p^{s_{1}+1/2}} \right)\prod _{j=1}^{4}\left( 1-\frac{\overline{\alpha _{j}(p)\chi (p)}}{p^{s_{2}+1/2}} \right) \\
& \quad =\left( 1-\frac{a_{\pi}(p)\chi (p)}{p^{s_{1}+1/2}} +O\left( \frac{1}{p^{1+2\Re (s_{1})}} \right) \right) \left( 1-\frac{\overline{a_{\pi}(p)\chi (p)}}{p^{s_{2}+1/2}} +O\left( \frac{1}{p^{1+2\Re (s_{2})}} \right) \right).
\end{align*}
Therefore, $\prod _{p|d}{\cal M}_{p}(s_{1},s_{2};d,g;\chi)$ is entire and on the set $\Re (s_{1})=\Re (s_{2})=100/\log Q$, we have
\begin{equation}
\label{515}
\begin{aligned}
\prod _{p|d}{\cal M}_{p}(s_{1},s_{2};d,g;\chi)\ll \prod _{p|d}\left( 1+\frac{2}{p^{1/2+100/\log Q}}+\frac{c}{p^{1+200/\log Q}}  \right) \leq \prod _{p|d}(3+c)^{2}  \leq \tau (d)^{c_{1}},
\end{aligned}
\end{equation}
where $c$ is some positive constant and $c_{1}=2\log (3+c)/\log 2$. \\
{\bf Case III}. Suppose $p|g$. Write $g_{p}=p^{\nu}$, where $\nu =\nu _{p,g}$ satisfies $p^{\nu}||g$. Then 
\begin{align*}
&   {\cal M}_{p}(s_{1},s_{2};d,g;\chi) \\
& \quad =\prod _{j=1}^{4}\left( 1-\frac{\alpha _{j}(p)\chi (p)}{p^{s_{1}+1/2}} \right)\prod _{j=1}^{4}\left( 1-\frac{\overline{\alpha _{j}(p)\chi (p)}}{p^{s_{2}+1/2}} \right) \\
& \quad \quad \times \left( |a_{\pi}(g_{p})|^{2}+\sum _{r=1}^{\infty}\frac{a_{\pi}(p^{\nu +r})\overline{a_{\pi}(p^{\nu})}\chi (p^{r})}{p^{(1/2+s_{1})r}} + \sum _{r=1}^{\infty}\frac{a_{\pi}(p^{\nu})\overline{a_{\pi}(p^{\nu +r})}\overline{\chi (p^{r})}}{p^{(1/2+s_{2})r}} \right).
\end{align*}
Due to the computation in Case II, on the set $\Re (s_{1})=\Re (s_{2})=100/\log Q$, we have 
\begin{equation}
\label{516}
\prod _{p|g}\left(\prod _{j=1}^{4}\left( 1-\frac{\alpha _{j}(p)\chi (p)}{p^{s_{1}+1/2}} \right)\prod _{j=1}^{4}\left( 1-\frac{\overline{\alpha _{j}(p)\chi (p)}}{p^{s_{2}+1/2}} \right) \right) \ll \tau (g)^{c_{1}}.
\end{equation}
Furthermore, 
\begin{equation}
\label{517}
\begin{aligned}
& \prod _{p|g} \left( |a_{\pi}(g_{p})|^{2}+\sum _{r=1}^{\infty}\frac{a_{\pi}(p^{\nu +r})\overline{a_{\pi}(p^{\nu})}\chi (p^{r})}{p^{(1/2+s_{1})r}} + \sum _{r=1}^{\infty}\frac{a_{\pi}(p^{\nu})\overline{a_{\pi}(p^{\nu +r})}\overline{\chi (p^{r})}}{p^{(1/2+s_{2})r}} \right) \\
& \quad \ll \prod _{p|g}\left\{ |a_{\pi}(g_{p})|^{2}\left(1+\frac{2}{p^{1/2+100/\log Q}}\right)+2\sum _{r=2}^{\infty}\frac{(\nu +r+1)^{4}(\nu +1)^{4}}{p^{(1/2+100/\log Q)r}}  \right\} \\
& \quad \ll \prod _{p|g}\left\{ |a_{\pi}(g_{p})|^{2}\left(1+\frac{2}{p^{1/2+100/\log Q}}\right) +\frac{2(\nu +1)^{8}c^{'}}{p^{1+200/\log Q}}  \right\} \\
& \quad \ll \prod _{p|g}\left\{ (\nu _{p,g}+1)^{8}\left(1+\frac{2}{p^{1/2+100/\log Q}} +  \frac{2 c^{'}}{p^{1+200/\log Q}}   \right)  \right\} \\
& \quad \ll \prod _{p|g}\left\{  (\nu _{p,g}+1)^{8}(3+2c^{'})  \right\} \\
& \quad \ll \tau (g)^{c_{3}+8},
\end{aligned}
\end{equation}
where $c^{'}$ is some absolute positive constant and  $c_{3}=2\log (3+2c^{'})/\log 2$. By (\ref{516}) and (\ref{517}), 
\begin{equation}
\label{518}
\prod _{p|g}{\cal M}_{p}(s_{1},s_{2};d,g;\chi)\ll \tau (g)^{c_{1}+c_{3}+8}.
\end{equation}
By (\ref{515}) and (\ref{518}), 
\begin{equation}
\label{519}
\theta (s_{1},s_{2};d,g;\chi):=\prod _{p|dg} {\cal M}_{p}(s_{1},s_{2};d,g;\chi)\ll \tau (d)^{c_{1}}\tau (g)^{c_{1}+c_{3}+8}.
\end{equation}
By (\ref{d}) and (\ref{519}), we obtain the conclusion of the lemma.
\end{proof}
We move the lines of integration in (\ref{53}) to $\Re (s_{1})=\Re (s_{2})=100/\log Q$. We do not cross the poles of the integrand and by Lemma 5.2 we have
\begin{equation}
\label{67}
\begin{aligned}
& {\cal G}_{\pi}(\Psi ,Q) \\
& \quad \ll N_{\pi}^{\frac{100}{\log Q}}Q\log Q\sum _{a\leq 2Q}\sum _{b>0}\sum _{h>0}\sum _{\chi (\mathrm{mod}\; abh)}\underset{b|g, (a,g)=1}{\sum _{g}}\underset{(d,gN_{\pi})=1}{\sum _{d\leq D}}\frac{\tau (d)^{c_{1}}\tau (g)^{c_{2}}}{adg \varphi (abh)} \\
& \quad \quad \times \int _{(\frac{100}{\log Q})} \int _{(\frac{100}{\log Q})} \left(| L(s_{1}+1/2, \pi \otimes \chi)L(s_{2}+1/2, \tilde{\pi}\otimes \overline{\chi})| +1 \right) \\
 & \hspace{8cm}  \times \left| \tilde{\cal W}^{\pm}\left( s_{1},s_{2}; \frac{QN_{\pi}^{1/2}d}{gh(\log Q)^{2\alpha}} \right) \right| |ds_{1}ds_{2}|.
\end{aligned}
\end{equation}
By (\ref{52}), for any positive integer $k$, we have 
\begin{align*}
&  \tilde{\cal W}^{\pm}\left( s_{1},s_{2}; \frac{QN_{\pi}^{1/2}d}{gh(\log Q)^{2\alpha}} \right) \\
& \quad \quad \quad    \ll \frac{1}{\max \{|s_{1}|,|s_{2}| \}^{k}}\left( 1 +\frac{QN_{\pi}^{1/2}d}{gh(\log Q)^{2\alpha}} \right)^{k-1}\exp \left(-c\left(\frac{QN_{\pi}^{1/2}d}{gh(\log Q)^{2\alpha}}\right)^{-\frac{1}{4}}  \right).
\end{align*}
Therefore, the sum over $g$ and $d$ in (\ref{67}) is 
\begin{align*}
& \underset{(a,g)=1}{\underset{b|g}{\sum _{g}}}\underset{(d,gN_{\pi})=1}{\sum _{d\leq D}}\frac{\tau (d)^{c_{1}}\tau (g)^{c_{2}}}{dg}\left| \tilde{W}^{\pm}\left(s_{1},s_{2}; \frac{QN_{\pi}^{1/2}d}{gh(\log Q)^{2\alpha}} \right) \right| \\
& \quad \ll \frac{1}{\max \{|s_{1}|, |s_{2}| \}^{k}}\underset{b|g}{\sum _{g}}\frac{\tau (g)^{c_{2}}}{g} \sum _{d\leq D}\frac{\tau (d)^{c_{1}}}{d}\left(1 +\frac{QN_{\pi}^{1/2}d}{gh(\log Q)^{2\alpha}} \right)^{k-1}\exp \left( -c \left(\frac{QN_{\pi}^{1/2}d}{gh(\log Q)^{2\alpha}}\right)^{-\frac{1}{4}} \right) \\
& \quad \ll \frac{\left( 1+ \frac{QN_{\pi}^{1/2}D}{bh(\log Q)^{2\alpha}}\right)^{k-1}}{\max \{|s_{1}|,|s_{2}| \}^{k}}(\log D)^{c_{3}}\underset{b|g}{\sum _{g}}\frac{\tau (g)^{c_{2}}}{g} \exp \left( -c \left( \frac{ gh(\log Q)^{2\alpha} }{QN_{\pi}^{1/2}D } \right)^{\frac{1}{4}} \right) \\
& \quad \ll \frac{\left( 1+ \frac{QN_{\pi}^{1/2}D}{bh(\log Q)^{2\alpha}}\right)^{k-1}}{\max \{|s_{1}|,|s_{2}| \}^{k}} \frac{(\log Q)^{\beta}\tau (b)^{c_{2}}}{b}\exp \left( -c \left( \frac{ bh(\log Q)^{2\alpha} }{QN_{\pi}^{1/2}D } \right)^{\frac{1}{4}} \right),
\end{align*}
where $c_{3}$ and $\beta$ above are some positive number, independent of $\alpha$, which might be replaced with larger value later. We estimate the integrals and summations with $S_{1}\leq |s_{1}| \leq 2S_{1}$, $S_{2}\leq |s_{2}| \leq 2S_{2}$, $A\leq a <2A$, $B\leq b <2B$, $H\leq h <2H$ in (\ref{67}). Since $\varphi (abh)^{-1}\ll (abh)^{-1}\log ^{\varepsilon}(abh)$, the contribution of this part to (\ref{67}) is bounded by 
\begin{align*}
& N_{\pi}^{\frac{100}{\log Q}}Q(\log Q)^{\beta} \sum _{A\leq a <2A}\sum _{B\leq b <2B}\sum _{H\leq h <2H}\sum _{\chi (\mathrm{mod}\; abh)}\frac{\log ^{\varepsilon}(ABH)}{A^{2}BH} \frac{\left( 1+ \frac{QN_{\pi}^{1/2}D}{BH(\log Q)^{2\alpha}}\right)^{k-1}}{\max \{|S_{1}|,|S_{2}| \}^{k}}  \\
& \quad \quad \times \frac{\tau (b)^{c_{2}}}{B} \exp \left(-c\left( \frac{ BH(\log Q)^{2\alpha} }{QN_{\pi}^{1/2}D } \right)^{\frac{1}{4}}  \right) \\
&\quad \times \int _{S_{1}\leq |s_{1}| \leq 2S_{1}}\int _{S_{2}\leq |s_{2}| \leq 2S_{2}} \left(1+|L(s_{1}+1/2, \pi \otimes \chi)L(s_{2}+1/2, \tilde{\pi}\otimes  \overline{\chi})| \right) |ds_{1}ds_{2}|.
\end{align*}
 Put $l=abh$. Then the above is at  most
\begin{equation}
\label{68}
\begin{aligned}
&N_{\pi}^{\frac{100}{\log Q}}Q(\log Q)^{\beta}\frac{\log ^{\varepsilon}(ABH)}{A^{2}B^{2}H} \frac{\left( 1+ \frac{QN_{\pi}^{1/2}D}{BH(\log Q)^{2\alpha}}\right)^{k-1}}{\max \{|S_{1}|,|S_{2}| \}^{k}}\exp \left(-c\left( \frac{ BH(\log Q)^{2\alpha} }{QN_{\pi}^{1/2}D } \right)^{\frac{1}{4}}  \right)   \\
& \quad \sum _{ABH \leq l <8ABH}\tau (l)^{c_{2}}\tau _{3}(l) \\
& \quad \times \sum _{\chi (\mathrm{mod}\; l)}\int _{S_{1}\leq |s_{1}| \leq 2S_{1}}\int _{S_{2}\leq |s_{2}| \leq 2S_{2}} \left(1+|L(s_{1}+1/2, \pi \otimes \chi)L(s_{2}+1/2, \tilde{\pi}\otimes  \overline{\chi})| \right) |ds_{1}ds_{2}|.
\end{aligned}
\end{equation}
By our assumption (\ref{assumption}), 
\begin{align*}
& \sum _{\chi (\mathrm{mod}\; l)}\int _{S_{1}\leq |s_{1}| \leq 2S_{1}}\int _{S_{2}\leq |s_{2}| \leq 2S_{2}} \left(1+|L(s_{1}+1/2, \pi \otimes \chi)L(s_{2}+1/2, \tilde{\pi}\otimes  \overline{\chi})| \right) |ds_{1}ds_{2}| \\
& \quad \leq \sum _{\chi (\mathrm{mod}\; l)}\int _{S_{1}\leq |s_{1}| \leq 2S_{1}}\int _{S_{2}\leq |s_{2}| \leq 2S_{2}} \left(1+|L(s_{1}+1/2, \pi \otimes \chi)|^{2}+|L(s_{2}+1/2, \tilde{\pi}\otimes  \overline{\chi})|^{2} \right) |ds_{1}ds_{2}| \\
& \quad \ll lN_{\pi}^{\varepsilon}S^{2}\log ^{K}(l(S+1)),
\end{align*}
where 
\[
S:=\max \{S_{1}, S_{2} \}.
\]
Thus (\ref{68}) is bounded by
\begin{equation}
\label{69}
\begin{aligned}
&N_{\pi}^{\varepsilon}Q(\log Q)^{\beta}\frac{H\log ^{K+\varepsilon}(ABH(S+1))\left( 1+ \frac{QN_{\pi}^{1/2}D}{BH(\log Q)^{2\alpha}} \right)^{k-1}}{S^{k-2}}\exp \left(-c \left( \frac{BH(\log Q)^{2\alpha}}{QN_{\pi}^{1/2}D}\right)^{\frac{1}{4}} \right).
\end{aligned}
\end{equation}
By (\ref{69}), (\ref{67}) is bounded by
\begin{equation}
\label{70}
\begin{aligned}
&N_{\pi}^{\varepsilon}Q(\log Q)^{\beta}\\
& \quad \times  \sum _{A,B,H,S}^{\quad \quad d}\frac{H\log ^{K+\varepsilon}(ABH(S+1))\left( 1+ \frac{QN_{\pi}^{1/2}D}{BH(\log Q)^{2\alpha}} \right)^{k-1}}{S^{k-2}}\exp \left(-c \left( \frac{BH(\log Q)^{2\alpha}}{QN_{\pi}^{1/2}D}\right)^{\frac{1}{4}} \right) \\
&  \leq N_{\pi}^{\varepsilon}Q(\log Q)^{\beta}\\
& \quad \times \sum _{B,H,S}^{\quad \quad d}\frac{H\log ^{K+\varepsilon}(QBH(S+1))\left( 1+ \frac{QN_{\pi}^{1/2}D}{BH(\log Q)^{2\alpha}} \right)^{k-1}}{S^{k-2}}\exp \left(-c \left( \frac{BH(\log Q)^{2\alpha}}{QN_{\pi}^{1/2}D}\right)^{\frac{1}{4}} \right).
\end{aligned}
\end{equation}
We take $k=1$ if $S\leq 1+QN_{\pi}^{1/2}D/BH(\log Q)^{2\alpha}$, and otherwise take $k=4$. Hence the contribution of the part with $S\leq 1+QN_{\pi}^{1/2}D/BH(\log Q)^{2\alpha}$ is at most
\begin{align*}
& N_{\pi}^{\varepsilon}Q(\log Q)^{\beta}\sum _{B,H}^{\quad \quad d}H\log ^{\beta}(QN_{\pi}BH)\left( 1+ \frac{QN_{\pi}^{1/2}D}{BH(\log Q)^{2\alpha}} \right)\exp \left(-c\left( \frac{BH(\log Q)^{2\alpha}}{QN_{\pi}^{1/2}D} \right)^{\frac{1}{4}} \right) \\
& \quad \ll \frac{Q^{2}DN_{\pi}^{1/2+\varepsilon}(\log N_{\pi}Q)^{\beta}}{(\log Q)^{2\alpha}}.
\end{align*}
Also, the contribution of the part with $S>1+QN_{\pi}^{1/2}D/BH(\log Q)^{2\alpha}$ has the same upper bound.   Therefore, we arrive at the following conclusion.
\begin{prop}
We have 
\begin{equation}
\label{g}
{\cal G}_{\pi}(\Psi ,Q) \ll \frac{Q^{2}DN_{\pi}^{1/2+\varepsilon}(\log N_{\pi}Q)^{\beta}}{(\log Q)^{2\alpha}},
\end{equation}
where $\beta >0$ is some constant which is independent of $\alpha$. 
\end{prop}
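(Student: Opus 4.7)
The plan is to reduce the estimation of $\mathcal{G}_{\pi}(\Psi,Q)$ to an application of the hypothesis \eqref{assumption} after a sequence of combinatorial rearrangements. First I would separate coprime residues by writing $g=(m,n)$, $m=gM$, $n=gN$ with $(M,N)=1$, and expand $\varphi(r)=\sum_{al=r}\mu(a)l$. The condition $r\mid(m\pm n)$ then becomes $al\mid(M\pm N)$, and parametrising $|M\pm N|=alh$ with a further Möbius factor $\sum_{b\mid(l,g)}\mu(b)$ to remove the coprimality $(l,g)=1$ eventually expresses the $d,r$-sum as a fourfold sum over $d,a,b,h$ subject to $M\equiv\mp N\pmod{abh}$. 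Using the homogeneity $V(c\xi,c\eta;\sqrt{c}\mu)=V(\xi,\eta;\mu)$ with $c=Q^{2}N_{\pi}^{1/2}/(\log Q)^{2\alpha}$, I would absorb $\Psi$ and $V$ into a single test function $\mathcal{W}^{\pm}(x,y;u)$. The support of $\Psi$ forces $a\leq 2Q$, giving the needed truncation.

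Next I would open the congruence via Dirichlet characters $\chi\pmod{abh}$ and apply the Mellin inversion of Lemma 5.1 to replace $\mathcal{W}^{\pm}$ by a double contour integral of its Mellin transform $\tilde{\mathcal{W}}^{\pm}(s_{1},s_{2};u)$. The inner double sum over $M,N$ is then the Dirichlet series $\mathcal{L}^{\pm}(s_{1},s_{2};d,g;\chi)$. Invoking Lemma 5.2, I would factor this series as $\overline{\chi}(\mp 1)$ times the product $L(s_{1}+1/2,\pi\otimes\chi)L(s_{2}+1/2,\tilde{\pi}\otimes\overline{\chi})$ multiplied by a Dirichlet-type correction $\lambda\theta$ of size $\ll(\log Q)\tau(d)^{c_{1}}\tau(g)^{c_{2}}$ on the shifted contour, together with a harmless diagonal contribution $|a_{\pi}(g)|^{2}$ which is easily absorbed.

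The third step is to shift both contours to $\Re(s_{1})=\Re(s_{2})=100/\log Q$ (crossing no poles in the domain where $\lambda$, $\theta$ are holomorphic) and exploit the decay bound \eqref{52} for $\tilde{\mathcal{W}}^{\pm}$: the factor $\exp(-c u^{-1/4})$ with $u=QN_{\pi}^{1/2}d/(gh(\log Q)^{2\alpha})$ gives essentially a cutoff at $BH(\log Q)^{2\alpha}\asymp QN_{\pi}^{1/2}D$ after a dyadic decomposition in $A,B,H,S$. Using $|ab|\leq a^{2}+b^{2}$ to split the product of the two $L$-factors and then applying the assumption \eqref{assumption} with modulus $l=abh$ produces a bound of shape $lN_{\pi}^{\varepsilon}S^{2}\log^{K}(l(S+1))$ for the character sum. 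Tuning the truncation parameter $k$ in \eqref{52} (taking $k=1$ when $S\leq 1+QN_{\pi}^{1/2}D/(BH(\log Q)^{2\alpha})$ and $k=4$ otherwise) balances the polynomial factor $(1+u)^{k-1}$ against the exponential decay.

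Finally, executing the geometric summations over the dyadic ranges $B,H,S$ yields the bound $Q^{2}DN_{\pi}^{1/2+\varepsilon}(\log N_{\pi}Q)^{\beta}/(\log Q)^{2\alpha}$. The main technical obstacle is accounting carefully for the divisor-type sums over $d,g,b$ — the factors $\tau(d)^{c_{1}}\tau(g)^{c_{2}}$ from Lemma 5.2 combined with the number-of-characters weights $\varphi(abh)^{-1}\ll(abh)^{-1+\varepsilon}$ produce several logarithmic losses which must all be collected into the $(\log N_{\pi}Q)^{\beta}$ factor without inflating the $N_{\pi}$ exponent beyond $1/2+\varepsilon$; this is ensured by the precise form of the rescaling constant $c=Q^{2}N_{\pi}^{1/2}/(\log Q)^{2\alpha}$, which matches exactly the $N_{\pi}^{\varepsilon}$-dependence available in \eqref{assumption}.
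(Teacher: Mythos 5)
Your proposal follows essentially the same route as the paper's proof, step for step: the Möbius expansion of $\varphi(r)$, the parametrisation $|M\pm N|=alh$ with an extra factor $\sum_{b\mid(l,g)}\mu(b)$, the homogeneity $V(c\xi,c\eta;\sqrt{c}\mu)=V(\xi,\eta;\mu)$ with $c=Q^{2}N_{\pi}^{1/2}/(\log Q)^{2\alpha}$ to package things into $\mathcal{W}^{\pm}$, the restriction to $a\leq 2Q$ from the support of $\Psi$, detecting the congruence by characters modulo $abh$, Mellin inversion via Lemma 5.1, the $L$-function factorisation of $\mathcal{L}^{\pm}$ from Lemma 5.2, the contour shift to $\Re(s_{i})=100/\log Q$, the dyadic decomposition in $A,B,H,S$ together with the $(k=1)/(k=4)$ switch in \eqref{52}, and finally the input of hypothesis \eqref{assumption} after splitting $|L_{1}L_{2}|\leq|L_{1}|^{2}+|L_{2}|^{2}$. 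The only point you gloss over that deserves one line of care is the diagonal $|a_{\pi}(g)|^{2}$ subtraction appearing in Lemma 5.2, which the paper absorbs into the ``$+1$'' term in \eqref{67}; but you correctly flag it as harmless, so this is a presentational difference only.
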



\section{Completion of the proof}
We let 
\[
N_{\pi}\ll (\log Q)^{N},
\]
where $N$ is an arbitrarily fixed positive number. Then by (\ref{g}) we have 
\[
{\cal G}_{\pi}(\Psi ,Q)\ll \frac{Q^{2}D}{(\log Q)^{2\alpha -\beta -N/2-\varepsilon N}}.
\]
Recall that $D$ is given by $D=(\log Q)^{\delta}$. We take $\delta$ and $\alpha$ so that the inequalities $\delta >A+1$ and $2\alpha -\beta -N/2-\delta -\varepsilon N \geq 1$ hold simultaneously, where $A$ is the constant in (\ref{S}).  Then we have
\[
{\cal S}_{\pi}(\Psi ,Q)+{\cal G}_{\pi}(\Psi ,Q) \ll \frac{Q^{2}}{\log Q}.
\]
Summing up, we obtain the following result.
\begin{prop}
Suppose the conductor $N_{\pi}$ of the automorphic representation $\pi$ is bounded by $N_{\pi}\ll (\log Q)^{N}$ for some positive constant $N$. Then under the assumption of Theorem 1.1, we have 
\begin{equation}
\label{SG}
{\cal S}_{\pi}(\Psi ,Q)+{\cal G}_{\pi}(\Psi ,Q) \ll \frac{Q^{2}}{\log Q}.
\end{equation}
The implied constant might be dependent on $N$, but is independent of $N_{\pi}$ and $Q$. 
\end{prop}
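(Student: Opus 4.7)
The plan is essentially a parameter-balancing exercise: combine the two upper bounds from Proposition 4.1 (for ${\cal S}_\pi$) and Proposition 5.1 (for ${\cal G}_\pi$) after feeding in the hypothesis $N_\pi \ll (\log Q)^N$, then choose the free parameters $\delta$ (defining $D = (\log Q)^\delta$) and $\alpha$ (the exponent in the smoothing $(\log Q)^\alpha$) large enough, in that order, so that each contribution is at most $Q^2/\log Q$.

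First I would rewrite the bound from Proposition 4.1, namely ${\cal S}_\pi(\Psi,Q)\ll N_\pi^{\varepsilon}Q^2(\log Q)^{A}/D^{1-\varepsilon}$, using the hypothesis: since $N_\pi^{\varepsilon}\ll (\log Q)^{\varepsilon N}$ and $D=(\log Q)^{\delta}$, this gives
\[
{\cal S}_\pi(\Psi,Q)\ll Q^2(\log Q)^{A+\varepsilon N-(1-\varepsilon)\delta}.
\]
To obtain ${\cal S}_\pi\ll Q^2/\log Q$ it suffices to pick $\delta$ so that $(1-\varepsilon)\delta\geq A+1+\varepsilon N$; in particular any $\delta>A+1$ works once $\varepsilon$ is taken small enough in terms of $A$ and $N$.

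Next I would treat ${\cal G}_\pi$. From Proposition 5.1 and $N_\pi^{1/2+\varepsilon}\ll (\log Q)^{N/2+\varepsilon N}$,
\[
{\cal G}_\pi(\Psi,Q)\ll Q^2(\log Q)^{\delta+N/2+\varepsilon N+\beta-2\alpha},
\]
so ${\cal G}_\pi\ll Q^2/\log Q$ follows as soon as $2\alpha-\beta-N/2-\delta-\varepsilon N\geq 1$. With $\delta$ already fixed in the previous step, and since $\beta$ is an absolute constant independent of $\alpha$ (as emphasised in Proposition 5.1), I can simply take $\alpha$ large enough to satisfy this inequality.

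Finally I would add the two estimates to conclude ${\cal S}_\pi(\Psi,Q)+{\cal G}_\pi(\Psi,Q)\ll Q^2/\log Q$, noting that the implied constant depends on $N$ (through the thresholds for $\delta$ and $\alpha$) but is independent of $N_\pi$ and $Q$. There is essentially no obstacle here; the only thing to be careful about is the order of choices (pick $\delta$ first so that Proposition 4.1 is controlled, then pick $\alpha$ depending on $\delta$ so that Proposition 5.1 is controlled), and to absorb the small $\varepsilon N$ losses by insisting on strict inequalities $\delta>A+1$ and $2\alpha>\beta+N/2+\delta+1$ together with a suitably small $\varepsilon>0$.
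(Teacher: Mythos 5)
Your argument is correct and follows essentially the same route as the paper: substitute $N_\pi\ll(\log Q)^N$ and $D=(\log Q)^\delta$ into Propositions 4.1 and 5.1, fix $\delta>A+1$ (with $\varepsilon$ small) to control ${\cal S}_\pi$, and then fix $\alpha$ large enough, using that $\beta$ is independent of $\alpha$, to control ${\cal G}_\pi$. The paper presents the same parameter choices (indeed the same inequalities $\delta>A+1$ and $2\alpha-\beta-N/2-\delta-\varepsilon N\geq 1$) in one step, so there is no substantive difference.
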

Combining  these results we obtain (\ref{maintheorem}).  \hspace{6.8cm}  \fbox


\section{Acknowledgements}
This work is partially supported by the JSPS, KAKENHI Grant Number 21K03204.


\noindent
Kobe University, \\
Rokkodai, Nada,\\
Hyogo, Japan\\
E-mail address: souno@math.kobe-u.ac.jp


\begin{thebibliography}{999}

\bibitem{BTZ}
F. Brumley, J. Thorner, A. Zaman
{\it Zeros of Rankin-Selberg $L$-functions at the edge of the critical strip}, arXiv:1804.06402 

\bibitem{BFKMM}
V. Blomer, E. Fouvry, E. Kowalski, Ph. Michel, D. Mili\'cevi\'c
{\it On moments of twisted $L$-functions}, Amer. J. Math. $\mathbf{137}$ No.3  (2017), 707-768 

\bibitem{CFKRS}
J. B. Conrey, D. Farmer, J. Keating, M. Rubinstein, N. Snaith,
{\it Integral moments of $L$-functions}, Proc. London Math. Soc. $\mathbf{91}$ (1), 33-104 (2005)

\bibitem{CIS}
J. B. Conrey, H. Iwaniec, K. Soundararajan,
{\it The sixth power moment of Dirichlet $L$-functions}, Geometric and Functional Analysis 2012; Vol. 22 : Issue 5:  1257-1288

\bibitem{CL}
V. Chandee, X. Li,
{\it The eighth moment of Dirichlet $L$-functions}, Adv. in Math. vol. 259, 339-375 (2014)

\bibitem{CL2}
V. Chandee, X. Li,
{\it The sixth moment of automorphic $L$-functions}, Algebra and Number Theory $\mathbf{11}$ (2017), No.3, 583-633

\bibitem{CL3}
V. Chandee, X. Li,
{\it The second moment of $\mathrm{GL}(4) \times \mathrm{GL}(2)$ $L$-functions at special points}, Adv. in Math. $\mathbf{365}$ vol.13

\bibitem{CL4}
V. Chandee, X. Li,
{\it The 8th moment of the family of $\Gamma _{1}(q)$ automorphic $L$-functions}, International Mathematics Research Notices, Volume 2020, Issue 22, 8443–8485


\bibitem{FPRS}
D. W. Farmer, A. Pitale, N. C. Ryan, R. Schmidt,
{\it  Analytic $L$-functions$:$ Definitions, Theorems, and Connections} arXiv:1711.10375 

\bibitem{GJ}
R. Godement, H. Jacquet,
{\it Zeta functions of simple algebras}, Lecture Notes in Math., vol. 260, Springer, Berlin (1972) 


\bibitem{HB}
D. R. Heath-Brown 
{\it The fourth power mean of Dirichlet's $L$-functions}, Analysis 1981 $\mathbf{1}$ , No. 1, 25-32

\bibitem{HL} G. H. Hardy, J. E. Littlewood,
{\it Contributions to the theory of the Riemann zeta-function and the theory of the distribution of primes}, Acta Math., $\mathbf{41}$:119–196 (1918)

\bibitem{I}
A. E. Ingham,
{\it Mean-value theorems in the theory of the Riemann zeta-function}, Proc. Lond. Math. Soc. $\mathbf{27}$, 273–300 (1926)

\bibitem{KS}
J. P. Keating, N. C. Snaith, 
{\it Random matrix theory and $L$-functions at $s = 1/2$}, Comm. Math. Phys. $\mathbf{214}$ (1) 91–100 (2000) 

\bibitem{LRS}
W. Luo, Z. Rudnick, P. Sarnak,
{\it On Selberg's eigenvalue conjecturre}, Geom. Funct. Anal. $\mathbf{5}$ (2) (1995), 387-401

\bibitem{P}
R.E.A.C. Paley,
{\it On $k$-analogues of some theorems in the theory of the Riemann zeta-function}, Proc. Lond. Math. Soc. 1931 (2) $\mathbf{32}$ , 273-311

\bibitem{RS}
Z. Rudnich, P. Sarnak,
{\it Zeros of principal $L$-functions and random matrix theory}, Duke Math. J. $\mathbf{81}$ (2), 269-322 (1996)


\bibitem{Sound}
K. Soundararajan,
{\it The fourth moment of Dirichlet $L$-functions}, Clay Math. Proc. 2007 $\mathbf{7}$, 239-246

\bibitem{Y}
M. P. Young,
{\it The fourth moment of Dirichlet L-functions}, Ann. Math. (2), $\mathbf{173}$ (1):1–50, (2011)

\end{thebibliography}
\end{document}